\numberwithin{equation}{section}
\theoremstyle{plain}
\newtheorem{thm}{\protect\theoremname}[section]
\theoremstyle{remark}
\newtheorem{rem}[thm]{\protect\remarkname}
\theoremstyle{definition}
\theoremstyle{plain}
\newtheorem{lem}[thm]{\protect\lemmaname}
\theoremstyle{plain}
\theoremstyle{plain}
\newtheorem{cor}[thm]{\protect\corollaryname}
\theoremstyle{plain}
\providecommand{\definitionname}{Definition}
\providecommand{\factname}{Fact}
\providecommand{\lemmaname}{Lemma}
\providecommand{\remarkname}{Remark}
\providecommand{\theoremname}{Theorem}
\providecommand{\corollaryname}{Corollary}
\begin{document}
\title[Discrete CKN inequalities and nonlinear elliptic equations]{Discrete Caffarelli-Kohn-Nirenberg inequalities and ground state
solutions to nonlinear elliptic equations }
\author{Fengwen Han and Ruowei Li}
\address{Fengwen Han: School of Mathematical and Statistics, Henan University,
475004 Kaifeng, Henan, China }
\email{fwhan@outlook.com}
\address{Ruowei Li: Department of Statistics and Data Science, National University of Singapore, 117546 Singapore}
\email{ruoweili@nus.edu.sg}
\begin{abstract}
In this paper, we prove the discrete Caffarelli-Kohn-Nirenberg inequalities
on the lattice $\mathbb{Z}^{N}$ ($N\geq 1$) in a broader range of parameters than
the classical continuous version \cite{CKN84}: 
\[
\parallel u\parallel_{\ell_{b}^{q}}\leq C(a,b,c,p,q,r,\theta,N)\parallel u\parallel_{D_{a}^{1,p}}^{\theta}\parallel u\parallel_{\ell_{c}^{r}}^{1-\theta},\:\forall u\in D_{a,0}^{1,p}(\mathbb{Z}^{N}) \cap \ell_c ^r(\mathbb{Z}^{N}),
\]
where $p,q,r>1,0\leq\theta\leq1$, 
$\frac{1}{p}+\frac{a}{N}>0,\frac{1}{r}+\frac{c}{N}>0,b\leq\theta a+(1-\theta)c,$
$\frac{1}{q^{\ast}}+\frac{b}{N}= \theta(\frac{1}{p}+\frac{a-1}{N})+(1-\theta)(\frac{1}{r}+\frac{c}{N})$ and $q\geq q^{\ast}$.
 For two special cases $\theta=1,a=0$ and $a=b=c=0$, by the discrete Schwarz rearrangement
established in \cite{HHH22}, we prove the existence
of extremal functions for the best constants in the supercritical
case $q>q^{\ast}$. As an application, we get positive ground state
solutions to the nonlinear elliptic equations.
\end{abstract}

\maketitle

\section{Introduction}

In \cite{CKN84},
Caffarelli, Kohn, and Nirenberg established the following Caffarelli-Kohn-Nirenberg (CKN for abbreviation) inequalities:
\begin{equation}
\parallel| x|^{b}u\parallel_{L^{q^{\ast}}}\leq C(a,b,c,p,r,\theta,N)\parallel| x|^{a}Du\parallel_{L^{p}}^{\theta}\parallel| x|^{c}u\parallel_{L^{r}}^{1-\theta}, \; \forall u\in C_{0}^{\infty}(\mathbb{R}^{N}), \label{eq:classical CKN ineqality}
\end{equation}
where $p,q^{\ast},q\geq 1,0\leq\theta\leq1,\frac{1}{q^{\ast}}+\frac{b}{N}>0,\frac{1}{p}+\frac{a}{N}>0,\frac{1}{r}+\frac{c}{N}>0,0\leq \theta a+(1-\theta)c-b\leq \theta$ and satisfying the dimensional balanced condition
\begin{equation}
\frac{1}{q^{\ast}}+\frac{b}{N}=\theta(\frac{1}{p}+\frac{a-1}{N})+(1-\theta)(\frac{1}{r}+\frac{c}{N}).\label{eq:condition-3(balance)}
\end{equation}
See Lin \cite{L86} for a version with higher order derivatives.



Note that the CKN inequalities
(\ref{eq:classical CKN ineqality}) contain the classical Sobolev
inequality ($\theta=1,$ $a=b=0$), the Hardy inequality ($\theta=1,$
$a=0,b=-1$), the Hardy-Sobolev inequality ($\theta=1$) and the Gagliardo-Nirenberg
inequality ($a=b=c=0$) as special cases. Whether the best constant
can be obtained by some $u$, which is called the extremal function,
satisfies a prototype of more general nonlinear degenerate elliptic
equations from physical phenomena \cite{BE97,DL90} and references
therein. Thus it is a fundamental task to study the existence and nonexistence of extremal functions, as well as their
qualitative properties in inequalities (\ref{eq:classical CKN ineqality})
in the full parameter domain.

Much progress has been made for the case of $\theta=1,p=2$. In \cite{A76,T76},
the best constants and the minimizers for the Sobolev inequality ($a=b=0$)
were given by Aubin and Talenti via the Schwarz symmetrization. In
\cite{L83}, Lieb applied the Schwarz symmetrization to the case $a=0,-1<b<0,$
and gave the best constants and explicit minimizers. Since the symmetrization
method does not work for $a\neq0$, Chou and Chu \cite{CC93}
generalized the method of moving planes to conformally flat operators and
gave the explicit minimizers in the case
$a-1<b\leq a\leq0$. In \cite{L84-1,L85-2}, Lions established
the Concentration-Compactness method, and the case $a=0,-1<b\leq0$
was solved in \cite[Theorem 2.4]{L85-2}. Wang and Willem \cite{WW20}
gave a quantitative evaluation of the non-compactness of minimizing
sequences and established the compactness of all minimizing sequences
up to dilations provided $a-1<b\leq a<0$. The case $0<a<1$
and $b=0$ was treated by Caldiroli and Musina in \cite{CM99}, who
gave the existence of ground states by a family of compact approximating
problems. By converting the problem to an equivalent one defined on
$H^{1}(\mathbb{R}\times\mathbb{S}^{N-1})$, Catrina and Wang \cite{CW01}
proved minimizer is always achieved if $a-1<b<a$ or $a=b\leq0$ and
never achieved if $b=a-1$ or $a=b>0$. 
By the same transformation, Bartsch, Peng and Zhang
\cite{BPZ07} established some existence and non-existence results
with different geometries of the domain $\Omega$ for the case $p\neq2$.
When $\theta\neq1$, the CKN inequalities involves interpolation terms
which make the problem much tougher, see the review paper by Dolbeault
and Esteban \cite{DE12-1}. DelPino et al. studied the special cases $a=b=c=0$, $p=2,q^{\ast}=2p,r=p+1$ \cite{DD02}
and $p=r=2$, $-\frac{N-2}{2}<a$, $c=a-1$, $a-1\leq b<a$, $q=\frac{2N}{N-2+2(a-b)}$ \cite{DETT11,DE12-2}. 

In recent years, people paid attention to the analysis on discrete
spaces, especially to the nonlinear equations
\cite{GLY16,HLY20,ZZ18,GHS22,HL21,HX21,HLW22,HLM22,L22,LW22}.
As far as we know, there are no results about the general CKN inequalities on graphs. In this
article, we prove the general CKN inequalities, and obtain positive
ground state solutions of the nonlinear degenerate elliptic equations by the discrete Schwarz rearrangement established in \cite{HHH22}. 

A simple and undirected graph $G=(V,E)$ consists of the set of vertices
$V$ and the set of edges $E$. Two vertices $x,y$ are called neighbors,
denoted by $x\thicksim y,$ if there is an edge $e$ connecting $x$
and $y$, i.e. $e=\left\{ x,y\right\} \in E$. In this paper, we mainly
consider integer lattice graphs $\mathbb{Z}^{N}$ which serve as the discrete counterparts
of $\mathbb{R}^{N}$. It consists of the set of
vertices $V=\mathbb{Z}^{N}$ and the set of edges 
\[
E=\left\{ \left\{ x,y\right\} :x,y\in\mathbb{Z}^{N},\mathop{\sum\limits _{i=1}^{N}\lvert x_{i}-y_{i}\rvert=1}\right\} .
\]
The combinatorial distance $d$ is defined as $d(x,y)=\text{inf}{\left\{ k:x=x_{0}\sim\cdot\cdot\cdot\sim x_{k}=y\right\} }$,
and denote $d(x)=d(x,0)$ for abbreviation. In this paper we avoid the singularity of the origin by shifting the weight $d(x)^{}$ to $d(x)+1$, while other papers may assume $u(0)=0$. In fact, the CKN inequalities
with weights $(1+d(x))^{s}$  for all functions which vanish at infinity imply the inequalities with weights $d(x)^{s}$ for those functions $u$ satisfying $u(0)=0$. 
For $s\in \mathbb{R}$, we define a vertex weight function as
\begin{align*}
    \mu_{s}:\mathbb{Z}^{N} & \rightarrow(0,\infty),\\
    x & \mapsto (1+d(x))^{s}.
\end{align*}
And it can be generalized to a large class of weights $\mu $  satisfying $C^{-1} \mu_{s}(x)\leq \mu(x)\leq C \mu_{s}(x)$ as $x\to \infty$.
We denote by $\ell_{a}^{p}(\mathbb{Z}^{N})$ the $\ell^{p}$-summable
functions on $\mathbb{Z}^{N}$ with weight $\mu_{ap}(x)$, and by $D_{a,0}^{1,p}(\mathbb{Z}^{N})$
the completion of finitely supported functions in the $D_{a}^{1,p}$
norm 
\[
\lVert u\rVert_{D_{a}^{1,p}(\mathbb{Z}^{N})}\coloneqq\left(\sum\limits _{x\in\mathbb{Z}^{N}}\sum\limits _{y\sim x}\mu_{ap}(x)\lvert u(y)-u(x)\rvert^{p}\right)^{1/p}.
\]
Denote $D_{0}^{1,p}(\mathbb{Z}^{N})$ and $D^{1,p}$ if $a=0$ for
abbreviation.
For the case $\theta=1$ and $0\leq a-b\leq 1$, define
\[
D_{a}^{1,p}(\mathbb{Z}^{N})\coloneqq\left\{ u\in\ell_{b}^{q^{\ast}}(\mathbb{Z}^{N}):\lVert u\rVert_{D_{a}^{1,p}(\mathbb{Z}^{N})}<\infty\right\} ,\] 
where $q^{\ast}=\frac{Np}{N-p+p(a-b)}>1$ which is equivalent to (\ref{eq:condition-3(balance)}). In Theorem \ref{thm:the equivalence for Z^N_k=00003D00003D00003D1}, we prove the two definitions of $D_{a,0}^{1,p}(\mathbb{Z}^{N})$ and $D_{a}^{1,p}(\mathbb{Z}^{N})$ are equivalent as in \cite{HLM22}.

Duarte and Silva generalize the CKN inequalities to the cases of non-homogeneous weights $(1+|x|^2)^{\frac{s }{2}}$ \cite{MR4593125}.
By linearly interpolating $u$ defined on $\mathbb{Z}^{N}$, we can extend $u$ to the piecewise linear $\Bar{u}$ defined on $\mathbb{R}^{N}$, see (\ref{eq:extension def}) for detailed definitions. A similar
idea was introduced in \cite[Subsection 4.1]{RS09}. Next,
we prove the following discrete version.
\begin{thm}\label{thm:discrete_CKN_inequality}
If $N\geq 1,p,q,r>1,0\leq\theta\leq1, \frac{1}{p}+\frac{a}{N}>0,\frac{1}{r}+\frac{c}{N}>0, b \leq \theta a+(1-\theta)c, \frac{1}{q^{\ast}}+\frac{b}{N}=\theta(\frac{1}{p}+\frac{a-1}{N})+(1-\theta)(\frac{1}{r}+\frac{c}{N})
$ and $q= q^{\ast}$, then
\begin{equation}
\parallel u\parallel_{\ell_{b}^{q}}\leq C(a,b,c,p,q,r,\theta,N)\parallel u\parallel_{D_{a}^{1,p}}^{\theta}\parallel u\parallel_{\ell_{c}^{r}}^{1-\theta},\:\forall u\in D_{a,0}^{1,p}(\mathbb{Z}^{N}) \cap \ell_c ^r(\mathbb{Z}^{N}).\label{eq:CKN inequality}
\end{equation}
\end{thm}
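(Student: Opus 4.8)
The plan is to transfer the inequality to the continuum through the piecewise linear extension $\bar u$ introduced in (\ref{eq:extension def}) and invoke a continuous CKN inequality there. Since $D_{a,0}^{1,p}(\mathbb{Z}^{N})$ is by definition the completion of finitely supported functions, I would first prove (\ref{eq:CKN inequality}) for finitely supported $u$ and then pass to the limit along an approximating sequence, using that the right-hand side is continuous and the left-hand side lower semicontinuous under convergence in $D_a^{1,p}\cap\ell_c^r$. For finitely supported $u$ the extension $\bar u$ is a compactly supported, continuous, piecewise affine (hence Lipschitz) function on $\mathbb{R}^{N}$, and the natural continuous inequality to apply is \emph{not} the homogeneous one (\ref{eq:classical CKN ineqality}) but the non-homogeneous version of Duarte and Silva \cite{MR4593125}, whose weights $(1+|x|)^{s}$ match the discrete weights $\mu_s(x)=(1+d(x))^{s}$; this is precisely what lets the parameter range be wider than in (\ref{eq:classical CKN ineqality}), where the singular weight $|x|^{s}$ forces the extra constraint $\theta a+(1-\theta)c-b\le\theta$.

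The heart of the argument is a cellwise two-sided comparison between the discrete norms of $u$ and the weighted continuous norms of $\bar u$. I would triangulate each unit cube by Kuhn's triangulation into $N!$ simplices; on each simplex $\Delta$ the function $\bar u$ is affine, every edge of the triangulation joins two graph neighbors, and all simplices are congruent to finitely many reference shapes, so every constant below is uniform. Since affine functions on $\Delta$ with vertices $v_0,\dots,v_N$ form a finite-dimensional space, the evaluation map $\ell\mapsto(\ell(v_0),\dots,\ell(v_N))$ identifies the competing functionals up to uniform constants, giving $\int_\Delta|\bar u|^{s}\,dx\asymp\sum_i|u(v_i)|^{s}$ and $|D\bar u|^{s}\asymp\sum_i|u(v_i)-u(v_{i-1})|^{s}$, where the consecutive differences run over genuine graph edges. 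The easy upper bounds $\int_\Delta|\bar u|^{r}\lesssim\max_i|u(v_i)|^{r}$ and $|\Delta|\,|D\bar u|^{p}\lesssim\sum_{\text{edges}}|u(y)-u(x)|^{p}$ handle the two right-hand factors, while the lower bound $\int_\Delta|\bar u|^{q}\gtrsim\sum_i|u(v_i)|^{q}$ (using $q=q^{\ast}$) handles the left-hand side.

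To insert the weights, I would use that every cell has diameter at most $\sqrt N$ and that $d(x)=\lVert x\rVert_1\asymp\lVert x\rVert_2=|x|$, so that $(1+|y|)\asymp(1+d(v))$ uniformly for $y\in\Delta$ and $v$ a vertex of $\Delta$; raising to a fixed power preserves this. Summing the weighted cellwise estimates over all simplices (each vertex and each edge lying in a bounded positive number of them) yields
\[
\lVert u\rVert_{\ell_b^q}\le C\,\lVert \bar u\rVert_{(1+|x|)^{b}L^{q}},\quad \lVert (1+|x|)^{a}D\bar u\rVert_{L^{p}}\le C\,\lVert u\rVert_{D_a^{1,p}},\quad \lVert (1+|x|)^{c}\bar u\rVert_{L^{r}}\le C\,\lVert u\rVert_{\ell_c^r},
\]
after which feeding these three comparisons into the Duarte--Silva inequality for $\bar u$ gives (\ref{eq:CKN inequality}). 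The main obstacle I expect is the single lower bound $\int_\Delta|\bar u|^{q}\gtrsim\sum_i|u(v_i)|^{q}$: its constant must be uniform across all cells and must interact correctly with the weight near the origin, which is exactly where the non-homogeneous weight $(1+|x|)^{s}$, rather than the singular $|x|^{s}$, is indispensable, since $(1+d(x))^{s}$ stays comparable to $(1+|x|)^{s}$ on every cell including the one at $0$, whereas $|x|^{s}$ would be singular there.
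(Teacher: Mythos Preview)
Your approach is correct and essentially matches the paper's: establish cellwise norm equivalences between $u$ and its piecewise-linear extension $\bar u$ via finite-dimensionality (the paper's Lemmas~\ref{lem: estimate of norm} and~\ref{lem: estimate of energy}), then apply the Duarte--Silva non-homogeneous CKN inequality (\ref{eq: CKN inequalities of non-homogeneous weights}) to $\bar u$ and transfer back. Your explicit use of Kuhn's triangulation, the weight comparison $(1+|y|)\asymp(1+d(v))$ on each cell, and the density argument for general $u\in D_{a,0}^{1,p}\cap\ell_c^r$ are more carefully spelled out than in the paper's terse proof, but the strategy is the same.
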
 
\begin{rem}

	(1) The weights $\mu_{s}$ can be generalized to a large class of weights $\mu $ satisfying $C^{-1} \mu_{s}(x)\leq \mu(x)\leq C \mu_{s}(x)$ as $x\to \infty$.

	(2) Since the discrete weight $\mu_{s}$ has no singularity
at the origin and by the monotonicity of $\mu_{s}$ with respect
to $s$, the continuous conditions $\frac{1}{q}+\frac{b}{N}>0$ and $\theta a+(1-\theta)c-b\leq \theta $ can be removed. 

(3) Since $\ell^{p}(\mathbb{Z}^{N})$ embeds into $\ell^{q}(\mathbb{Z}^{N})$
for any $q>p$, one verifies that the
discrete CKN inequality (\ref{eq:CKN inequality}) holds
when $q\geq q^{\ast}$. It is called subcritical for $q<q^{\ast}$, critical
for $q=q^{\ast}$ and supercritical for $q>q^{\ast}$ for the CKN
inequality. Therefore, (\ref{eq:CKN inequality}) holds
in both critical and supercritical cases on $\mathbb{Z}^{N}$. 

\end{rem}

Consider two special cases $\theta=1,a=0,q>q^{\ast}$ and $a=b=c=0,q>q^{\ast}$ respectively, and the optimal constants in the CKN inequality (\ref{eq:CKN inequality})
are given by 
\begin{equation}
S:=\inf_{\substack{u\in D_{0}^{1,p}(\mathbb{Z}^{N})\\
\lVert u\rVert_{\ell_{b}^{q}}=1
}
}\lVert u\rVert_{D^{1,p}}^{p},\label{eq: inf}
\end{equation}
where 
\begin{equation}
N\geq1,1< p<N,-1\leq b\leq0,q>q^{\ast}=\frac{Np}{N-p-pb}>1.\label{eq:condition-discrete}
\end{equation}
And 
\begin{equation}
K:=\inf_{\substack{u\in D_{0}^{1,p}(\mathbb{Z}^{N})\\
\lVert u\rVert_{\ell_{}^{q}}=1
}
}\lVert u\rVert_{D^{1,p}}^{\theta}\lVert u\rVert_{\ell^{r}}^{1-\theta},\label{eq: inf-2}
\end{equation}
where 
\begin{equation}
N\geq 1,p,r>1, 0\leq\theta\leq1, \frac{1}{q^{\ast}}=\theta(\frac{1}{p}-\frac{1}{N})+\frac{1-\theta}{r},q>q^{\ast}>1.
\label{eq:condition-discrete-2}
\end{equation}


In order to prove that the optimal constants are achieved, consider a minimizing
sequence $\{u_{n}\}\subset D_{0}^{1,p}(\mathbb{Z}^{N})$ satisfying
\begin{equation}
\lVert u_{n}\rVert_{\ell_{b}^{q}}=1,\;\lVert u_{n}\rVert_{D^{1,p}}^{p}\to S \text{\;or\;} \parallel u_{n}\parallel_{D^{1,p}}^{\theta}\parallel u_{n}\parallel_{\ell^{r}}^{1-\theta}\to K,n\to\infty.\label{eq:min seq}
\end{equation}

Due to the lack of good symmetry on $\mathbb{Z}^{N}$, the moving plane
method \cite{CC93}, transformation method \cite{CW01} and approximation
method which needs Pohozaev-type identities \cite{CM99} cannot be
directly applied to the discrete setting. This variational problem
lacks translation invariance with the presence of a weight. Hence
the discrete Concentration-Compactness principle introduced in \cite{HL21,HLM22,L22} cannot be applied to the problem,
since we cannot rule out the vanishing limit by the translation trick.
Recently, the authors of \cite{HHH22} established a discrete
version of the Schwarz rearrangement on $\mathbb{Z}^{N}$ (see Section 2.2 for detailed
definitions), which provides a new idea to prove the existence of
optimizers of discrete functional inequalities. By
the discrete Hardy-Littlewood and P$\acute{\text{o}}$lya-Szeg$\ddot{\text{o}}$
inequalities \cite[Corollary 5.12, Theorem 5.15]{HHH22}, the minimizing
sequence can be restricted to the Schwarz symmetric function space
$\mathcal{S}(\mathbb{Z}^{N})$, then the minimizer exists by a compact
embedding result in the supercritical case $q>q^{\ast}$ \cite[Theorem 4.16]{HHH22}. 
For more literature on rearrangement on graphs, we refer readers to \cite{frankl1989extremal, bollobas1991compressions, pruss98, weinstein1999excitation, burchard2006rearrangement, shlapentokh2010asymptotic, HH10, gupta2022symmetrization}.

We prove the following main results.
\begin{thm}
\label{thm:main1}If (\ref{eq:condition-discrete}) is satisfied,
then the variational problem (\ref{eq: inf}) admits a minimizer.
\end{thm}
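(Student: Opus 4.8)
The plan is to run the direct method of the calculus of variations, using the discrete Schwarz rearrangement to restore the compactness that is otherwise destroyed on $\mathbb{Z}^{N}$. First I would take a minimizing sequence $\{u_{n}\}\subset D_{0}^{1,p}(\mathbb{Z}^{N})$ as in (\ref{eq:min seq}), so that $\lVert u_{n}\rVert_{\ell_{b}^{q}}=1$ and $\lVert u_{n}\rVert_{D^{1,p}}^{p}\to S$; in particular $\{u_{n}\}$ is bounded in $D^{1,p}$. The difficulty is that under (\ref{eq:condition-discrete}) the functional is neither translation invariant (the weight $\mu_{bq}$ breaks translation invariance) nor dilation invariant, so the discrete concentration-compactness machinery of \cite{HL21,HLM22,L22} cannot be used to rule out vanishing. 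The key idea is therefore to replace $u_{n}$ by its Schwarz symmetrization $u_{n}^{*}\in\mathcal{S}(\mathbb{Z}^{N})$ and to exploit that $-1\leq b\leq 0$ makes the weight $\mu_{bq}$ radially nonincreasing.

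Concretely, since $b\leq 0$ the weight $\mu_{bq}$ is nonincreasing in $d(x)$ and already symmetric, so the discrete Hardy-Littlewood inequality \cite[Corollary 5.12]{HHH22} gives $\lVert u_{n}^{*}\rVert_{\ell_{b}^{q}}\geq\lVert u_{n}\rVert_{\ell_{b}^{q}}=1$, while the discrete P\'olya-Szeg\H o inequality \cite[Theorem 5.15]{HHH22} gives $\lVert u_{n}^{*}\rVert_{D^{1,p}}\leq\lVert u_{n}\rVert_{D^{1,p}}$. Setting $v_{n}:=u_{n}^{*}/\lVert u_{n}^{*}\rVert_{\ell_{b}^{q}}$ then produces a sequence in $\mathcal{S}(\mathbb{Z}^{N})$ with $\lVert v_{n}\rVert_{\ell_{b}^{q}}=1$, and, because we divide by a factor $\geq 1$ and the $D^{1,p}$ norm is $1$-homogeneous, $\lVert v_{n}\rVert_{D^{1,p}}^{p}\leq\lVert u_{n}^{*}\rVert_{D^{1,p}}^{p}\leq\lVert u_{n}\rVert_{D^{1,p}}^{p}\to S$. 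Since each $v_{n}$ satisfies the constraint, the definition of $S$ forces $\lVert v_{n}\rVert_{D^{1,p}}^{p}\geq S$, so $\lVert v_{n}\rVert_{D^{1,p}}^{p}\to S$ as well; thus $\{v_{n}\}$ is again a minimizing sequence, now confined to the Schwarz symmetric space $\mathcal{S}(\mathbb{Z}^{N})$.

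With the minimizing sequence restricted to $\mathcal{S}(\mathbb{Z}^{N})$, I would invoke the compact embedding in the supercritical regime: since $q>q^{\ast}$, \cite[Theorem 4.16]{HHH22} yields a subsequence (not relabeled) and a limit $v\in\mathcal{S}(\mathbb{Z}^{N})\cap D^{1,p}$ with $v_{n}\to v$ strongly in $\ell_{b}^{q}$. Strong convergence gives $\lVert v\rVert_{\ell_{b}^{q}}=\lim_{n}\lVert v_{n}\rVert_{\ell_{b}^{q}}=1$, so $v\not\equiv 0$ and the constraint survives passage to the limit, which is exactly the place where vanishing would otherwise have broken the argument. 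Finally, the $D^{1,p}$ norm is convex and continuous, hence weakly lower semicontinuous, so $\lVert v\rVert_{D^{1,p}}^{p}\leq\liminf_{n}\lVert v_{n}\rVert_{D^{1,p}}^{p}=S$; together with $\lVert v\rVert_{\ell_{b}^{q}}=1$ and the definition (\ref{eq: inf}) of $S$ this yields $\lVert v\rVert_{D^{1,p}}^{p}=S$, so $v$ is the desired minimizer.

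The step I expect to be the crux is the compactness: everything hinges on \cite[Theorem 4.16]{HHH22}, and it is precisely the supercriticality $q>q^{\ast}$ together with the symmetry restriction that makes the embedding $\mathcal{S}(\mathbb{Z}^{N})\cap D^{1,p}\hookrightarrow\ell_{b}^{q}$ compact. In the critical case $q=q^{\ast}$ this compactness fails, which is consistent with the fact that the infimum need not be attained there; the symmetrization reduction in the first two paragraphs is what allows this compact embedding to be applied at all.
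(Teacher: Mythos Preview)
Your approach is the same as the paper's: symmetrize a minimizing sequence via discrete Hardy--Littlewood and P\'olya--Szeg\H{o}, then use the compact embedding for Schwarz symmetric functions in the supercritical range. Two small points where the paper is more careful than your proposal:

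First, \cite[Theorem~4.16]{HHH22} (Lemma~\ref{lem:compact embedding} here) is a statement about sequences bounded in $\ell^{p}$, not in $D^{1,p}$: it says that a bounded sequence in $\mathcal{S}(\mathbb{Z}^{N})\cap\ell^{p_{0}}$ has a subsequence converging in $\ell^{q}$ for every $q>p_{0}$. You cannot invoke it with $D^{1,p}$-boundedness alone; you must first pass through the discrete CKN inequality (\ref{eq:CKN inequality}) with $\theta=1$, $a=0$ to obtain $\lVert \mu_{b}v_{n}\rVert_{\ell^{q^{\ast}}}\leq C$, and then apply the lemma to the sequence $\mu_{b}v_{n}$ with $p_{0}=q^{\ast}<q$. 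That is exactly how the paper argues, and it is why $q>q^{\ast}$ enters.

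Second, the limit $v$ you obtain lies a priori only in $\ell_{b}^{q}$ with finite $D^{1,p}$ seminorm; the infimum (\ref{eq: inf}) is taken over $D_{0}^{1,p}(\mathbb{Z}^{N})$, so you still owe the reader a reason why $v\in D_{0}^{1,p}(\mathbb{Z}^{N})$. The paper closes this by invoking the equivalence $D_{0}^{1,p}(\mathbb{Z}^{N})=D^{1,p}(\mathbb{Z}^{N})$ established in Theorem~\ref{thm:the equivalence for Z^N_k=00003D00003D00003D1}. With these two additions your argument is complete and coincides with the paper's.
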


\begin{rem}
(1) The minimizer obtained is nonnegative and satisfies a ``symmetric'' property, called Schwarz symmetric, see Section 2.2.

(2) The weights $\mu_{b}$ can be generalized to non-increasing radial weights $\mu $ that satisfy $C^{-1} \mu_{b}(x)\leq \mu(x)\leq C \mu_{b}(x)$ as $x\to \infty$.

(3) In the continuous setting, the parameter region of the existence
result is $-1<b\leq0$, $p=2$ and $q=2^{\ast}$, while the discrete
parameter region is $-1\leq b\leq0$ in the supercritical case $q>q^{\ast}.$ 

(4) For the case $a\neq0$, the Schwarz symmetric method cannot work
since the P$\acute{\text{o}}$lya-Szeg$\ddot{\text{o}}$ type inequality
with a weight is unknown. In the continuous setting, the parameter
region of the existence result $0<a-b<1$ or $a=b\leq0$ when $p=2$
is proved by the Concentration-Compactness principle \cite{WW20}
and the transformation method \cite{CW01}. Neither can be applied directly to $\mathbb{Z}^{N}$ because of the lack of scaling tricks.

(5) In the continuous setting, the authors in \cite{CW01} proved
that the optimal constant is never achieved when $b=a>0$ or $b=a-1$
by converting the problem to an equivalent one defined on $H^{1}(\mathbb{R}\times\mathbb{S}^{N-1})$,
which cannot be directly applied to $\mathbb{Z}^{N}$. So the nonexistence
problem on $\mathbb{Z}^{N}$ is open.

\end{rem}

\begin{thm}
\label{thm:main2}If (\ref{eq:condition-discrete-2}) is satisfied,
then the variational problem (\ref{eq: inf-2}) admits a minimizer.
\end{thm}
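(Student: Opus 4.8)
The plan is to follow the variational scheme sketched after (\ref{eq:min seq}): take a minimizing sequence, symmetrize it, establish compactness in $\ell^q$ via \cite[Theorem 4.16]{HHH22}, and conclude by lower semicontinuity. Concretely, let $\{u_n\}\subset D_0^{1,p}(\mathbb{Z}^N)$ be a minimizing sequence for (\ref{eq: inf-2}), so that $\lVert u_n\rVert_{\ell^q}=1$ and $\lVert u_n\rVert_{D^{1,p}}^\theta\lVert u_n\rVert_{\ell^r}^{1-\theta}\to K$. First I would replace each $u_n$ by its Schwarz rearrangement $u_n^\ast\in\mathcal{S}(\mathbb{Z}^N)$. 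The discrete Hardy--Littlewood inequality and equimeasurability \cite[Corollary 5.12]{HHH22} preserve every $\ell^s$ norm, so $\lVert u_n^\ast\rVert_{\ell^q}=1$ and $\lVert u_n^\ast\rVert_{\ell^r}=\lVert u_n\rVert_{\ell^r}$, while the P\'olya--Szeg\"o inequality \cite[Theorem 5.15]{HHH22} gives $\lVert u_n^\ast\rVert_{D^{1,p}}\le\lVert u_n\rVert_{D^{1,p}}$. Hence $\{u_n^\ast\}$ is again minimizing, and we may assume each $u_n=u_n^\ast$ is nonnegative and radially nonincreasing.

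The heart of the argument is a non-vanishing estimate that uses the supercriticality $q>q^\ast$ in an essential way. Since $u_n$ is radially nonincreasing, $u_n(x)\le u_n(0)=\lVert u_n\rVert_{\ell^\infty}$ for all $x$, whence
\[
1=\sum_{x}u_n(x)^q=\sum_x u_n(x)^{q^\ast}u_n(x)^{q-q^\ast}\le u_n(0)^{q-q^\ast}\lVert u_n\rVert_{\ell^{q^\ast}}^{q^\ast}.
\]
Applying the critical inequality (Theorem~\ref{thm:discrete_CKN_inequality} with $q=q^\ast$ and $a=b=c=0$) bounds $\lVert u_n\rVert_{\ell^{q^\ast}}^{q^\ast}\le C\bigl(\lVert u_n\rVert_{D^{1,p}}^\theta\lVert u_n\rVert_{\ell^r}^{1-\theta}\bigr)^{q^\ast}$, which is bounded along the minimizing sequence; since $q-q^\ast>0$, this forces $u_n(0)\ge\delta>0$ uniformly. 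In particular $\lVert u_n\rVert_{\ell^r}\ge\lVert u_n\rVert_{\ell^\infty}\ge\delta$, and because $\lVert u_n\rVert_{\ell^\infty}\le\lVert u_n\rVert_{\ell^q}=1$ we also get the pointwise bound $0\le u_n\le 1$. I would then combine these facts with the discrete Sobolev embedding (the $\theta=1$ case of Theorem~\ref{thm:discrete_CKN_inequality}) and the finiteness of $K$ to bound both factors: the lower bound $\lVert u_n\rVert_{\ell^r}\ge\delta$ controls $\lVert u_n\rVert_{D^{1,p}}$ from above through the convergence of the product, while a spreading (blow-up) of $\lVert u_n\rVert_{\ell^r}$ would force $\lVert u_n\rVert_{D^{1,p}}^\theta\lVert u_n\rVert_{\ell^r}^{1-\theta}\to\infty$, contradicting convergence to $K$. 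I expect this boundedness step --- ruling out the spreading of $\ell^r$-mass for the interpolation functional --- to be the main obstacle, since it is precisely the mechanism that must replace the concentration--compactness and scaling arguments unavailable on $\mathbb{Z}^N$.

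With $\{u_n\}$ bounded in $D^{1,p}$ and radially nonincreasing, I would invoke the compact embedding \cite[Theorem 4.16]{HHH22}, valid in the supercritical regime $q>q^\ast$, to extract a subsequence converging strongly in $\ell^q$ to some $u\in\mathcal{S}(\mathbb{Z}^N)$; passing to a further subsequence I may also assume $\lVert u_n\rVert_{D^{1,p}}\to A_\ast$ and $\lVert u_n\rVert_{\ell^r}\to B_\ast$ with $A_\ast^\theta B_\ast^{1-\theta}=K$. Strong $\ell^q$ convergence gives $\lVert u\rVert_{\ell^q}=1$, so $u$ is admissible and nontrivial, consistent with $u(0)\ge\delta$. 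Finally, by weak lower semicontinuity of the two norms --- equivalently Fatou's lemma applied to the edge sum defining $\lVert\cdot\rVert_{D^{1,p}}$ and the vertex sum defining $\lVert\cdot\rVert_{\ell^r}$ --- I obtain $\lVert u\rVert_{D^{1,p}}\le A_\ast$ and $\lVert u\rVert_{\ell^r}\le B_\ast$, hence $\lVert u\rVert_{D^{1,p}}^\theta\lVert u\rVert_{\ell^r}^{1-\theta}\le K$. Since $u$ lies in the constraint set, the reverse inequality holds by definition of the infimum, so $u$ is the desired minimizer. The degenerate endpoints $\theta=0$ and $\theta=1$ (where the functional reduces to a single norm) are handled by the same scheme, the latter coinciding with Theorem~\ref{thm:main1} for $b=0$.
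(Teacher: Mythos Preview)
Your overall strategy --- symmetrize, use the critical CKN inequality to bound $\lVert u_n\rVert_{\ell^{q^\ast}}$, apply the compact embedding of Lemma~\ref{lem:compact embedding} to obtain strong $\ell^q$ convergence, then finish by lower semicontinuity --- is exactly the paper's plan. The non-vanishing estimate $u_n(0)\ge\delta$ is a correct side observation, though not strictly needed.

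The genuine gap is the one you flag yourself: the sentence ``blow-up of $\lVert u_n\rVert_{\ell^r}$ would force $\lVert u_n\rVert_{D^{1,p}}^\theta\lVert u_n\rVert_{\ell^r}^{1-\theta}\to\infty$'' is false as stated. Nothing you have established rules out $\lVert u_n\rVert_{D^{1,p}}\to 0$ at precisely the rate that keeps the product equal to $K$; your lower bound $\lVert u_n\rVert_{\ell^r}\ge\delta$ bounds $\lVert u_n\rVert_{D^{1,p}}$ from \emph{above}, not below, and the Sobolev embedding you mention does not help either (it only gives $\lVert u_n\rVert_{\ell^{p^\ast}}\lesssim\lVert u_n\rVert_{D^{1,p}}$, and $q$ need not lie below $p^\ast$). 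So you do not yet have finite limits $A_\ast,B_\ast$ to feed into the two separate Fatou inequalities.

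The paper sidesteps this by never bounding the factors along the sequence. It uses exhaustion: from pointwise convergence one has, for every finite ball $B_R$,
\[
\lVert\nabla u\rVert_{\ell^p(B_R)}^\theta\lVert u\rVert_{\ell^r(B_R)}^{1-\theta}
=\lim_{n}\,\lVert\nabla u_n\rVert_{\ell^p(B_R)}^\theta\lVert u_n\rVert_{\ell^r(B_R)}^{1-\theta}\le K,
\]
and letting $R\to\infty$ gives $\lVert\nabla u\rVert_{\ell^p(\mathbb{Z}^N)}^\theta\lVert u\rVert_{\ell^r(\mathbb{Z}^N)}^{1-\theta}\le K$ directly for the limit function. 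Only then does one check finiteness of each factor, by contradiction at the level of $u$: if $\lVert u\rVert_{\ell^r}=\infty$ then $\lVert\nabla u\rVert_{\ell^p}=0$, so $u$ is constant, contradicting $\lVert u\rVert_{\ell^q}=1$; symmetrically for the other factor. The very same contradiction (applied after Fatou: $B_\ast=\infty\Rightarrow A_\ast=0\Rightarrow\lVert\nabla u\rVert_{\ell^p}=0\Rightarrow u$ constant) is exactly what is missing from your sketch; once you insert it, your argument and the paper's become essentially equivalent.
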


\begin{rem}
The difficulty of (\ref{eq: inf-2}) is the weak lower semi-continuity of the product term. 
	We use the idea of exhaustion, while the continuous case applied
	the weak lower semi-continuity theorem \cite[Lemma 3.2]{P07} to an equivalent variational
	problem of constraints $\lVert u_{n}\rVert_{L^{q}(\mathbb{R}^{N})}=\lVert u_{n}\rVert_{L^{r}(\mathbb{R}^{N})}=1$
	by the homogeneous and dilation properties on $\mathbb{R}^{N}$, which
 cannot be applied on $\mathbb{Z}^{N}$ directly. 

\end{rem}

Recalling the continuous setting, the Euler-Lagrange equation of (\ref{eq: inf})
\begin{equation}
-\Delta u=\lvert x\rvert^{qb}u^{q-1}, x\in \mathbb{R}^{N}  \label{classical Hardy equation}
\end{equation}
is called the H$\acute{e}$non (resp. Hardy or Lane-Emden) equation if $b>0$ (resp. $b<0$ or $b=0$). For the supercritical case $q>q^{\ast}$ with $b>-1$, Ni \cite{Ni82,Ni86} got the existence result by a fixed point argument. As indicated by Mitidieri and Pohozaev in \cite[Theorem 6.1]{MP01}, Dancer, Du and Guo in \cite[Theorem 2.3]{DDG11} (see also Brezis and Cabr$\acute{e}$ \cite{BC98}), the condition $b>-1$ is necessary for the existence of solutions to (\ref{classical Hardy equation}). For the critical case $q=q^{\ast}$, Lions \cite[Section 2.4]{L85-2}, Wang and Willem \cite{WW20} proved the existence of solutions to (\ref{classical Hardy equation}) by the Concentration-Compactness principle. Catrina and Wang \cite{CW01} used the transformation method. For the subcritical case $1<q<q^{\ast}$, Reichel and Zou \cite{RZ00} proved the non-existence results with the refinement of the moving sphere method. This non-existence result was revisited by Phan and Souplet in \cite{PS12}. 
Ng$\hat{o}$ and Ye \cite{NY22} completed the classification of the solutions to higher order Hardy-H$\acute{e}$non equations.
\\

As an application, the minimizer obtained in Theorem \ref{thm:main1} is a positive ground state solution
to the following discrete degenerate elliptic equation.
\begin{cor}
\label{cor:1} If (\ref{eq:condition-discrete}) is satisfied, then there is a positive Schwarz symmetric ground state solution of the $p$-Laplace  Hardy
equation 
\begin{equation}
\Delta_{p}u+\mu_{qb}u^{q-1}=0,\;u\in D_{0}^{1,p}(\mathbb{Z}^{N}),\label{eq:EL equation}
\end{equation}
where the discrete $p$-Laplace
$\Delta_{p}u(x)\coloneqq\underset{y\sim x}{\sum}\lvert u(y)-u(x)\rvert^{p-2}(u(y)-u(x)).
$
\end{cor}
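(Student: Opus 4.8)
The plan is to obtain the solution as a suitably rescaled constrained minimizer, in four steps: produce the minimizer, derive its Euler--Lagrange equation by a first variation, rescale to normalize the nonlinearity while upgrading nonnegativity to strict positivity, and identify the result as a least-energy solution. By Theorem \ref{thm:main1}, under \eqref{eq:condition-discrete} the infimum $S$ in \eqref{eq: inf} is attained by some nonnegative, Schwarz symmetric $u\in D_0^{1,p}(\mathbb{Z}^N)$ with $\|u\|_{\ell_b^q}=1$ and $\|u\|_{D^{1,p}}^p=S$. Two facts will be used repeatedly: first, $S>0$, since the discrete CKN inequality \eqref{eq:CKN inequality} with $\theta=1$, $a=0$ gives $1=\|u\|_{\ell_b^q}^p\leq C^p\|u\|_{D^{1,p}}^p$; second, $q>p$, because \eqref{eq:condition-discrete} forces $q>q^{\ast}=\frac{Np}{N-p(1+b)}\geq p$ as $0\leq 1+b\leq 1$.

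To derive the equation, note that the quotient $Q(w)=\|w\|_{D^{1,p}}^p/\|w\|_{\ell_b^q}^p$ is invariant under $w\mapsto\lambda w$ and is minimized by $u$, so for every finitely supported $\phi$ the scalar map $t\mapsto Q(u+t\phi)$ is differentiable at $t=0$ with vanishing derivative; differentiation passes through the sums because $\phi$ is finitely supported and $u\in D^{1,p}\cap\ell_b^q$. Using the discrete summation by parts
\[
\sum_{x}\sum_{y\sim x}|u(y)-u(x)|^{p-2}(u(y)-u(x))(\phi(y)-\phi(x))=-2\sum_{x}\Delta_p u(x)\,\phi(x),
\]
which follows from the antisymmetry of $(x,y)\mapsto|u(y)-u(x)|^{p-2}(u(y)-u(x))$, this stationarity becomes $-2\sum_x\Delta_p u(x)\phi(x)=S\sum_x\mu_{qb}(x)u(x)^{q-1}\phi(x)$ for all $\phi$, i.e. $\Delta_p u+\frac{S}{2}\mu_{qb}u^{q-1}=0$ pointwise; the coefficient is positive precisely because $S>0$.

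Next I would rescale and prove positivity. Put $v=\kappa u$ with $\kappa=(S/2)^{1/(q-p)}>0$, well defined since $q>p$; as $\Delta_p$ is positively $(p-1)$-homogeneous and $w\mapsto w^{q-1}$ is $(q-1)$-homogeneous, the relation $\kappa^{q-1}=\frac{S}{2}\kappa^{p-1}$ shows $v$ solves \eqref{eq:EL equation}. For strict positivity, suppose $v(x_0)=0$; since $v\geq 0$, \eqref{eq:EL equation} gives $\Delta_p v(x_0)=\sum_{y\sim x_0}v(y)^{p-1}=0$, so $v\equiv 0$ on the neighbors of $x_0$, and connectedness of $\mathbb{Z}^N$ propagates this to $v\equiv 0$, contradicting $\|v\|_{\ell_b^q}=\kappa>0$. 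Hence $v>0$ everywhere; this discrete strong maximum principle is the one step that genuinely uses the structure of the equation.

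Finally, to identify $v$ as a ground state I would introduce the energy $J(w)=\frac{1}{2p}\|w\|_{D^{1,p}}^p-\frac1q\sum_x\mu_{qb}(x)|w(x)|^q$, whose critical points are exactly the weak solutions of \eqref{eq:EL equation}, and the Nehari manifold $\mathcal{N}=\{w\neq 0:\langle J'(w),w\rangle=0\}$, on which $J(w)=\frac12(\frac1p-\frac1q)\|w\|_{D^{1,p}}^p$. Every nontrivial solution lies on $\mathcal{N}$, and by the scale invariance of $Q$ minimizing $J$ over $\mathcal{N}$ is equivalent to \eqref{eq: inf}, so $v$ attains the least energy among nontrivial solutions. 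Since the existence of the minimizer in Theorem \ref{thm:main1} already does the analytic heavy lifting, the remaining work is variational bookkeeping; its only delicate points are pinning down the sign and scale of the Lagrange multiplier so that the nonlinearity carries a positive coefficient, and the discrete strong maximum principle invoked for strict positivity.
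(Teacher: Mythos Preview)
Your proposal is correct and follows essentially the same approach as the paper: obtain the Schwarz symmetric minimizer from Theorem~\ref{thm:main1}, derive the Euler--Lagrange equation via a Lagrange-multiplier/first-variation argument, and upgrade nonnegativity to strict positivity by the (discrete) strong maximum principle. The paper compresses all of this into three sentences, whereas you spell out the summation-by-parts identity, the sign and value of the multiplier, the rescaling $v=(S/2)^{1/(q-p)}u$ to absorb it, and the Nehari-manifold verification that the resulting solution has least energy---details the paper leaves implicit but which are all sound as you have written them.
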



Similarly, we get a positive ground state solution to the Euler-Lagrange
equation of (\ref{eq: inf-2}), which
has many studies in the discrete setting \cite{HLW22} and continuous setting \cite{BFV14,Z21,DD02}.
\begin{cor}
	\label{cor: GN_EL equation}If (\ref{eq:condition-discrete-2}) is satisfied,	then there is a positive Schwarz symmetric ground state solution of
	the equation  
	\begin{equation}
		\lambda_1 \Delta_{p}u- \lambda_2 u^{r-1}+  u^{q-1}=0,\; u\in D_0^{1,p}(\mathbb{Z}^{N}) \cap \ell ^r(\mathbb{Z}^{N}), \label{eq:EL equation of GN}
	\end{equation}
	where $\lambda_1,\lambda_2 >0$ are determined by parameters $N,p,q,r,\theta$
	and $u$. In particular, if $\theta=0$, then $\lambda_1\Delta_{p}u$ term
	is omitted, and if $\theta=1$, then $\lambda_2 u^{r-1}$ term is omitted.
\end{cor}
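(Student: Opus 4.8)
The plan is to upgrade the minimizer produced by Theorem~\ref{thm:main2} into the desired positive ground state by computing its Euler--Lagrange equation. First I would fix a minimizer $u$ of (\ref{eq: inf-2}), normalized so that $\lVert u\rVert_{\ell^q}=1$. Since the proof of Theorem~\ref{thm:main2} confines the minimizing sequence to the Schwarz symmetric class through the discrete rearrangement (P\'olya--Szeg\H{o} and Hardy--Littlewood) inequalities \cite{HHH22}, and since passing from $u$ to $|u|$ does not increase $\lVert u\rVert_{D^{1,p}}$ while leaving $\lVert u\rVert_{\ell^q}$ and $\lVert u\rVert_{\ell^r}$ unchanged, I may assume $u\geq 0$ and Schwarz symmetric. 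Throughout it is cleaner to work with the $0$-homogeneous quotient $Q(v)=\lVert v\rVert_{D^{1,p}}^{\theta}\lVert v\rVert_{\ell^r}^{1-\theta}\lVert v\rVert_{\ell^q}^{-1}$, whose infimum over nonzero $v\in D_0^{1,p}(\mathbb{Z}^N)\cap\ell^r(\mathbb{Z}^N)$ equals $K$ and is attained at $u$.

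To obtain the equation I would take the first variation. Set $A=\lVert u\rVert_{D^{1,p}}^{p}$, $B=\lVert u\rVert_{\ell^r}^{r}$ and $C=\lVert u\rVert_{\ell^q}^{q}$. For any finitely supported $\phi$ the map $t\mapsto\log Q(u+t\phi)$ is differentiable at $t=0$ (only finitely many edges and vertices are affected, and $p,q,r>1$ make the integrands differentiable even where $u$ vanishes), and stationarity gives
\[
\frac{\theta}{p}\frac{A'}{A}+\frac{1-\theta}{r}\frac{B'}{B}-\frac{1}{q}\frac{C'}{C}=0.
\]
By the antisymmetry of $|u(y)-u(x)|^{p-2}(u(y)-u(x))$ and discrete summation by parts, $A'=-2p\sum_x\Delta_p u(x)\phi(x)$, while $B'=r\sum_x u^{r-1}\phi$ and $C'=q\sum_x u^{q-1}\phi$. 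Since $\phi$ is arbitrary, the identity above localizes to a pointwise relation, and multiplying by $-C$ yields $\lambda_1\Delta_p u-\lambda_2 u^{r-1}+u^{q-1}=0$ with $\lambda_1=2\theta C/A$ and $\lambda_2=(1-\theta)C/B$ (the factor $2$ coming from the double-counting of edges). With the normalization $C=1$ these are positive constants depending only on $N,p,q,r,\theta$ and $u$, and they degenerate exactly as claimed: $\lambda_1=0$ when $\theta=0$ and $\lambda_2=0$ when $\theta=1$.

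For strict positivity (in the range $0<\theta\leq1$, where $\lambda_1>0$) I would invoke a discrete strong maximum principle. If $u(x_0)=0$ for some $x_0$, then evaluating the equation at $x_0$ forces $\lambda_1\Delta_p u(x_0)=0$; but $\Delta_p u(x_0)=\sum_{y\sim x_0}u(y)^{p-1}\geq0$ with equality only when $u$ vanishes on all neighbors of $x_0$, so the connectedness of $\mathbb{Z}^N$ propagates $u\equiv0$, contradicting $\lVert u\rVert_{\ell^q}=1$. Hence $u>0$ everywhere. The ground state property then follows from the variational construction: realizing the equation as the Euler--Lagrange equation of its action functional, $u$ (after the natural rescaling) lies on the corresponding Nehari manifold and, because it attains the minimal quotient value $K$, has least action among all nontrivial solutions.

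The existence and symmetry having been supplied by Theorem~\ref{thm:main2}, the remaining obstacles are two. The genuinely degenerate endpoint $\theta=0$ must be handled separately: there $\lambda_1=0$, the $p$-Laplacian drops out, and the Euler--Lagrange relation collapses to the pointwise algebraic identity $u^{q-1}=\lambda_2 u^{r-1}$, so the maximum-principle argument no longer propagates positivity and one interprets positivity on the support of the Schwarz symmetric minimizer. Second, pinning down the ground state (least-action) property rigorously requires the Nehari-manifold comparison rather than the bare quotient infimum, and care is needed because the coefficients $\lambda_1,\lambda_2$ are themselves determined by $u$; I would control the tail contributions in these comparisons using the discrete CKN inequality of Theorem~\ref{thm:discrete_CKN_inequality}.
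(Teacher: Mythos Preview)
Your proposal is correct and follows the same route as the paper: take the Schwarz symmetric minimizer from Theorem~\ref{thm:main2}, derive the Euler--Lagrange equation via Lagrange multipliers (first variation), and obtain strict positivity from the discrete strong maximum principle. The paper's own proof is a two-line sketch (``It follows from the Lagrange multiplier\ldots The maximum principle yields that $u$ is positive''), so your explicit computation of the variation, the constants $\lambda_1,\lambda_2$, and the propagation argument for positivity simply fill in details the paper leaves implicit; your observations about the degenerate endpoint $\theta=0$ and the Nehari-manifold justification of the ``ground state'' label are likewise points the paper does not address.
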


\begin{rem}

According to the results in continuous cases,
we conjecture that (\ref{eq:EL equation}) and (\ref{eq:EL equation of GN}) have positive solutions
when $q=q^{\ast}$, and non-negative solutions are trivial when
$q<q^{\ast}$.
\end{rem}

The paper is organized as follows. In Section 2, we recall some basic
notations and introduce the discrete Schwarz rearrangement. In Section 3, we prove the discrete CKN inequality
using an extension method and the equivalence of Sobolev spaces. In
Section 4, we prove Theorem \ref{thm:main1}, Theorem \ref{thm:main2}, Corollary \ref{cor:1} and Corollary \ref{cor: GN_EL equation}. In this paper, we use $a\lesssim b$ to denote $a\leq Cb$ for some
$C>0$ and $a\thickapprox b$ to denote $a\lesssim b\lesssim a$.

\section{Preliminaries}

\subsection{Graphs and basic notations.} Consider integer lattice graph $\mathbb{Z}^{N}$, which is the graph
consisting of the set of vertices $V=\mathbb{Z}^{N}$ and
the set of edges 
\[
E=\left\{ \left\{ x,y\right\} :x,y\in\mathbb{Z}^{N},\mathop{\sum\limits _{i=1}^{N}\lvert x_{i}-y_{i}\rvert=1}\right\} .
\]
The combinatorial distance $d$ is defined as $d(x,y)=\text{inf}{\left\{ k:x=x_{0}\sim\cdot\cdot\cdot\sim x_{k}=y\right\} }$,
i.e. the length of the shortest path connecting $x$ and $y$ by assigning
each edge of length one, and denote $d(x)=d(x,0)$ for abbreviation.

We denote the space of functions on $\mathbb{Z}^{N}$ by $C(\mathbb{Z}^{N})$.
For $u\in C(\mathbb{Z}^{N})$, its support set is defined as $\textrm{supp}(u)\coloneqq\{x\in\mathbb{Z}^{N}:u(x)\neq0\}$.
Let $C_{c}(\mathbb{Z}^{N})$ be the set of all functions with finite
support, and $$C_0(\mathbb{Z}^{N}):=\{u \in C(\mathbb{Z}^{N}): |\{x : |u(x)| > t \}|<+\infty,\ \forall t > 0\}$$ be the space of functions that vanish at infinity. For $s\in \mathbb{R}$, we define a vertex weight function as
\begin{align*}
    \mu_{s}:\mathbb{Z}^{N} & \rightarrow(0,\infty),\\
    x & \mapsto (1+d(x))^{s}.
\end{align*}
For any $u\in C(\mathbb{Z}^{N})$,
the $\ell_{a}^{p}$ norm of $u$ is defined
as 
\[
\lVert u\rVert_{\ell_{a}^{p}(\mathbb{Z}^{N})}\coloneqq\begin{cases}
\left(\underset{x\in\mathbb{Z}^{N}}{\sum}\mu_{ap}(x)\lvert u(x)\rvert^{p}\right)^{1/p} & \text{\ensuremath{1\leq p<\infty,}}\\
\underset{x\in\mathbb{Z}^{N}}{\sup}\mu_{a}(x)\lvert u(x)\rvert & p=\infty.
\end{cases}
\]
The $\ell_{a}^{p}(\mathbb{Z}^{N})$ space is defined as 
\[
\ell_{a}^{p}(\mathbb{Z}^{N})\coloneqq\left\{ u\in C(\mathbb{Z}^{N}):\lVert u\rVert_{{\ell_{a}^{p}(\mathbb{Z}^{N})}}<\infty\right\} .
\]
For any $u\in C(\mathbb{Z}^{N})$, define the difference operator
for any $x\sim y$ as

\[
\nabla_{xy}u=u(y)-u(x).
\]
Let 
\[
\lvert\nabla u(x)\rvert_{p}\coloneqq\left(\sum\limits _{y\sim x}\lvert\nabla_{xy}u\rvert^{p}\right)^{1/p}
\]
be the $p$-norm of the gradient of $u$ at $x$.

The $D_{a}^{1,p}$ norm of $u$ is given by 
\[
\lVert u\rVert_{D_{a}^{1,p}(\mathbb{Z}^{N})}=\lVert\mu_{a}\nabla u\rVert_{\ell^{p}(\mathbb{Z}^{N})}\coloneqq\left(\sum\limits _{x\in\mathbb{Z}^{N}}\sum\limits _{y\sim x}\mu_{ap}(x)\lvert\nabla_{xy}u\rvert^{p}\right)^{1/p},
\]
then $D_{a,0}^{1,p}(\mathbb{Z}^{N})$ is the completion of $C_{c}\left(\mathbb{Z}^{N}\right)$
in $D_{a}^{1,p}$ norm. And denote $D_{0}^{1,p}(\mathbb{Z}^{N})$
and $D^{1,p}$ if $a=0$ for abbreviation.

\subsection{Discrete Schwarz rearrangement.} The discrete Schwarz rearrangement developed in \cite{HHH22} is crucial for the proof of the main results, so we give a brief introduction here. Let $$C_0^+(\mathbb{Z}^{N}):=\{u \in C_0(\mathbb{Z}^{N}): u\text{ is non-negative}\}$$ be the set of functions that are admissible for discrete Schwarz rearrangement. The discrete Schwarz rearrangement in $\mathbb{Z}^{N}$ is defined via the following 3 parts.

\underline{Part 1}: Define one-dimensional rearrangements.\\
Let $f \in C_0^+(\mathbb{Z})$ or $C_0^+(\mathrm{\mathbb{Z}+\frac{1}{2}})$ be non-negative with function values: $f_1\ge f_2 \ge \cdots $, and define one-dimensional rearrangement via
\begin{equation*}
    f^{\star}(x)=
    \begin{cases}
    f_{1-2x}, \qquad & x\le 0;\\
    f_{2x}, & x>0.
    \end{cases}
\end{equation*}

\begin{figure}[h]
    \centering
    \subfigure[$f^{\star}$ on $\mathbb{Z}$]{
        \includegraphics[width=0.52\textwidth]{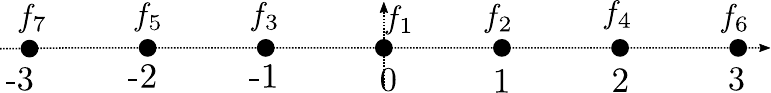}}\\
    \subfigure[$f^{\star}$ on $\mathbb{Z}+\frac{1}{2}$]{
        \includegraphics[width=0.52\textwidth]{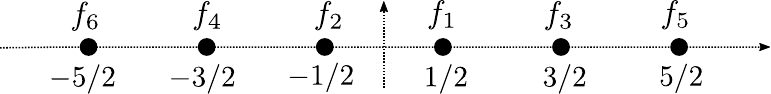}}
    \caption{Two types of one-dimensional rearrangements}
    
\end{figure}

\underline{Part 2}: Define one-step rearrangements.\\
Let $e_i$ be the $i$-th standard $N$-dimensional unit vector, and let
$$\Vec{E}=\{e_i,\frac{e_i-e_j}{2},\frac{e_i+e_j}{2}: \ 1\le i< j \le N\}.$$
For each $e\in \Vec{E}$, there is a natural equivalence relation in $\mathbb{Z}^N$ defined via $x \overset{e}{\sim} y$ if and only if $x-y$ is parallel to $e$ in $\mathbb{R}^N$. Then there is a unique corresponding partition $\mathbb{Z}^N=\bigsqcup_{\alpha\in I_{e}}V_e^{\alpha}$, 
where $I_e$ is the index set and each equivalence class $V_e^{\alpha}$ is the restriction of $\mathbb{Z}^N$ on a line parallel to $e$. Moreover, for any equivalence class $V_e^{\alpha}$ with respect to $\overset{e}{\sim}$, the map 
\begin{equation*}
    x \mapsto <e,x>
\end{equation*}
is a bijection to $\mathbb{Z}$ or $\mathbb{Z}+\frac{1}{2}$. Then $(u|_{V_e^{\alpha}})^{\star}$ is well defined and the one-step rearrangement of $u$ with respect to $e$ is defined via 
\begin{equation*}
    (R_e u)|_{V_e^{\alpha}}=(u|_{V_e^{\alpha}})^{\star}, \forall u\in C_0^+(\mathbb{Z}^N).
\end{equation*}
Generally speaking, $\mathbb{Z}^{N}$ is decomposed into the union of one-dimensional $\mathbb{Z}$s or $(\mathbb{Z}+\frac{1}{2})$s with respect to directions in $\Vec{E}$. Then one-step rearrangement is defined by rearrange $u$ on each $\mathbb{Z}$ or $\mathbb{Z}+\frac{1}{2}$.
\begin{figure}[H]
    \centering
    \setcounter{subfigure}{0}
    \subfigure[]{\includegraphics[width=0.24\textwidth]{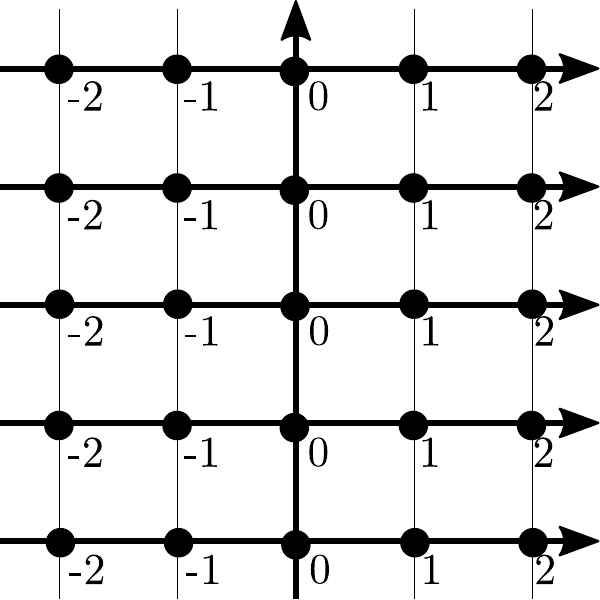}}\qquad
    \subfigure[]{\includegraphics[width=0.24\textwidth]{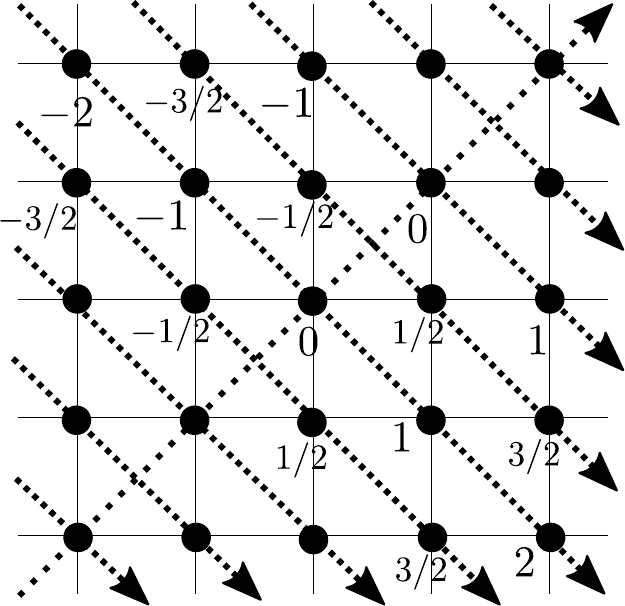}}
    \caption{Decomposition of $\mathbb{Z}^2$ w.r.t. directions $e_1$ and $\frac{e_1-e_2}{2}$}
    \label{fig-V_2}
\end{figure}

\underline{Part 3}: Define Schwarz rearrangement in $\mathbb{Z}^N$.\\
Let $R^k$ be the iteration of $k$ one-step rearrangements, i.e.
$$R^k u:=\underbrace{\cdots \circ R_{e_1} \circ \cdots \circ R_{\frac{e_1+e_2}{2}}\circ \cdots \circ R_{\frac{e_1-e_2}{2}} \circ \cdots \circ R_{e_2} \circ R_{e_1}}_k u.$$

The \emph{discrete Schwarz rearrangement} of an admissible function $u\in C_0^+(\mathbb{Z}^N)$ is defined via
\begin{equation}
    \label{eq:def_of_DSR}
    u^{\star}=R_{\mathbb{Z}^N} u(x):= \lim_{k\to \infty} R^k u(x), \ x\in \mathbb{Z}^N.
\end{equation}

The limit in \eqref{eq:def_of_DSR} exists, so $u^{\star}$ is well-defined; see \cite{HHH22}. A function $u\in C_{0}^{+}(\mathbb{Z}^{N})$ is called Schwarz symmetric
if $u=u^{\star}$. Let $\mathcal{S}(\mathbb{Z}^{N})\coloneqq\left\{ u\in C_{0}^{+}(\mathbb{Z}^{N}):u=u^{\star}\right\} $
be the set of all Schwarz symmetric functions on $\mathbb{Z}^{N}$.
The following results are crucial for our next proof. 

\begin{lem}[Discrete Hardy-Littlewood inequality; \cite{HHH22}, Corollary 5.12]
\label{lem:rearrangement inequalities} Let $u,v\in C_{0}^{+}(\mathbb{Z}^{N})$, then 
\[
\sum_{x \in \mathbb{Z}^{N}}u(x)v(x)\leq\sum_{x \in \mathbb{Z}^{N}}u^{\star}(x)v^{\star}(x). 
\]
\end{lem}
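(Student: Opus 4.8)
The plan is to reduce the inequality to a purely set-theoretic statement about superlevel sets via the layer-cake (bathtub) representation, and then to invoke two structural properties of the discrete rearrangement of \cite{HHH22}. Since $u,v$ are nonnegative we may write $u(x)=\int_0^\infty \mathbf{1}_{\{u>s\}}(x)\,ds$ and likewise for $v$, so that by Tonelli's theorem
\[
\sum_{x\in\mathbb{Z}^N} u(x)v(x)=\int_0^\infty\!\!\int_0^\infty \bigl|\{u>s\}\cap\{v>t\}\bigr|\,ds\,dt,
\]
and the identical identity holds with $u^\star,v^\star$ in place of $u,v$. A virtue of this reduction is that it tolerates infinite sums, so no a priori integrability of $u,v$ is needed. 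It therefore suffices to prove, for every pair of thresholds $s,t>0$, the set inequality $|\{u>s\}\cap\{v>t\}|\le|\{u^\star>s\}\cap\{v^\star>t\}|$.

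For the set inequality I would use two facts about $\star$. First, \emph{equimeasurability}: each one-step rearrangement $R_e$ merely permutes the values of a function along every equivalence class $V_e^\alpha$, hence preserves the cardinality of every superlevel set; passing to the limit $R^k u\to u^\star$ preserves this, so $|\{u>s\}|=|\{u^\star>s\}|$ and $|\{v>t\}|=|\{v^\star>t\}|$. Second, \emph{nestedness}: the superlevel sets of any Schwarz symmetric function are precisely the canonical discrete balls produced by $\star$, and these form a family totally ordered by inclusion. Consequently the two Schwarz symmetric sets $\{u^\star>s\}$ and $\{v^\star>t\}$ are comparable, whence $|\{u^\star>s\}\cap\{v^\star>t\}|=\min(|\{u^\star>s\}|,|\{v^\star>t\}|)$. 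Combining this with the trivial bound $|\{u>s\}\cap\{v>t\}|\le\min(|\{u>s\}|,|\{v>t\}|)$ and equimeasurability gives the set inequality, and integrating in $s,t$ finishes the proof.

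An alternative route stays closer to the iterative definition of $\star$: first establish the one-dimensional rearrangement inequality $\sum_z f(z)g(z)\le\sum_z f^\star(z)g^\star(z)$ on $\mathbb{Z}$ and on $\mathbb{Z}+\tfrac12$ (itself a layer-cake reduction to the one-dimensional, hence trivially nested, case); then, since the sum over $\mathbb{Z}^N$ splits as $\sum_{\alpha\in I_e}\sum_{x\in V_e^\alpha}$ and $R_e$ acts as the one-dimensional rearrangement on each class, one obtains $\sum_x u v\le\sum_x (R_e u)(R_e v)$; iterating with the same sequence of directions applied simultaneously to $u$ and $v$ yields $\sum_x uv\le\sum_x (R^k u)(R^k v)$ for every $k$, a quantity nondecreasing in $k$.

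The main obstacle appears in whichever route is chosen. In the iterative route it is the passage to the limit $k\to\infty$ inside the infinite sum: pointwise convergence $R^k u\to u^\star$ together with Fatou's lemma only delivers $\sum u^\star v^\star\le\lim_k\sum(R^k u)(R^k v)$, which is the wrong direction, so one must identify the limit exactly, using equimeasurability to control the tails of the sums uniformly in $k$ and thereby run a dominated-convergence argument. In the layer-cake route the difficulty is concentrated entirely in the structural claim that the Schwarz symmetric sets of \cite{HHH22} are totally ordered by inclusion; this nestedness is the genuine content of the discrete theory, and it is exactly what the one-step construction and its limit are engineered to guarantee. For this reason I would present the layer-cake argument as the main proof and cite the equimeasurability and nestedness of the discrete balls from \cite{HHH22}.
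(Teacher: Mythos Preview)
The paper does not supply a proof of this lemma at all: it is quoted verbatim as \cite{HHH22}, Corollary~5.12, and used as a black box in the proofs of Theorems~\ref{thm:main1} and~\ref{thm:main2}. There is therefore nothing in the present paper to compare your argument against.

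That said, your layer-cake reduction is the standard and correct strategy, and you have correctly isolated the two nontrivial inputs needed from \cite{HHH22}: equimeasurability of $u\mapsto u^\star$ and the total ordering by inclusion of the superlevel sets of Schwarz symmetric functions. With those granted, the chain
\[
\bigl|\{u>s\}\cap\{v>t\}\bigr|\le \min\bigl(|\{u>s\}|,|\{v>t\}|\bigr)=\min\bigl(|\{u^\star>s\}|,|\{v^\star>t\}|\bigr)=\bigl|\{u^\star>s\}\cap\{v^\star>t\}\bigr|
\]
is valid, and Tonelli finishes the job. Your caveat that nestedness is the real content of the discrete construction is well placed: in the continuous setting it is automatic because superlevel sets of radially decreasing functions are concentric balls, whereas on $\mathbb{Z}^N$ one must check that the iterated one-step rearrangements of \cite{HHH22} actually produce a canonical nested family of ``discrete balls''. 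You are right to cite rather than reprove this. Your remark on the iterative route is also accurate: Fatou alone gives the wrong inequality in the limit, and one needs the uniform tail control coming from equimeasurability to upgrade to an equality; this is why the layer-cake argument is cleaner here.
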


\begin{lem}[Discrete P$\acute{\text{o}}$lya-Szeg$\ddot{\text{o}}$ inequality; \cite{HHH22}, Theorem 5.15]
\label{lem:Polya-Szego_inequality}
Let $u\in\ell^{p}(\mathbb{Z}^{N})$ be non-negative for $p\geq1$, then 
\[
\lVert\nabla u^{\star}\rVert_{p}\leq\lVert\nabla u\rVert_{p}. 
\]
\end{lem}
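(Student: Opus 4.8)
The plan is to prove the inequality by reducing it to a single one-step rearrangement and then passing to the limit. First I would note that a non-negative $u\in\ell^{p}(\mathbb{Z}^{N})$ is automatically admissible, i.e. $u\in C_{0}^{+}(\mathbb{Z}^{N})$, since $\lvert\{u>t\}\rvert\leq\lVert u\rVert_{\ell^{p}}^{p}/t^{p}<\infty$ for every $t>0$. By \eqref{eq:def_of_DSR} we have $R^{k}u\to u^{\star}$ pointwise, and each $R^{k}u$ is a finite composition of one-step rearrangements $R_{e}$ with $e\in\vec{E}$. Hence it suffices to show that every one-step rearrangement is $p$-energy non-increasing,
\[
\lVert\nabla R_{e}u\rVert_{p}\leq\lVert\nabla u\rVert_{p},\qquad e\in\vec{E}.
\]
Granting this, the sequence $\lVert\nabla R^{k}u\rVert_{p}$ is non-increasing in $k$, and the edgewise Fatou lemma gives $\lVert\nabla u^{\star}\rVert_{p}^{p}=\sum_{x}\sum_{y\sim x}\liminf_{k}\lvert\nabla_{xy}R^{k}u\rvert^{p}\leq\liminf_{k}\lVert\nabla R^{k}u\rVert_{p}^{p}\leq\lVert\nabla u\rVert_{p}^{p}$, which is the assertion.

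For a coordinate direction $e=e_{i}$ the one-step inequality splits cleanly. Decompose the edges of $\mathbb{Z}^{N}$ into those \emph{longitudinal} to $e$ (lying inside a single equivalence class $V_{e}^{\alpha}$) and those \emph{transverse} to $e$ (joining neighbouring classes at the same value of $\langle e,\cdot\rangle$). The longitudinal contribution on each line does not increase, because the one-dimensional symmetric decreasing rearrangement $f\mapsto f^{\star}$ minimises $\sum_{\ell}\lvert f(\ell+1)-f(\ell)\rvert^{p}$ among all rearrangements of a fixed multiset of values (the one-dimensional discrete P\'olya--Szeg\"o inequality, provable by adjacent transpositions). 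For the transverse contribution, the key observation is that the symmetric placement rule depends on position alone: writing $f_{1}\geq f_{2}\geq\cdots$ and $g_{1}\geq g_{2}\geq\cdots$ for the decreasing rearrangements of the values of $u$ on two neighbouring lines $V^{\alpha},V^{\alpha'}$, one has $(u|_{V^{\alpha}})^{\star}(\ell)=f_{r(\ell)}$ and $(u|_{V^{\alpha'}})^{\star}(\ell)=g_{r(\ell)}$ for the \emph{same} rank function $r$, so that $\sum_{\ell}\lvert f_{r(\ell)}-g_{r(\ell)}\rvert^{p}=\sum_{k}\lvert f_{k}-g_{k}\rvert^{p}$. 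By the Hardy--Littlewood--P\'olya rearrangement inequality the monotone coupling is an $\ell^{p}$-contraction, $\sum_{k}\lvert f_{k}-g_{k}\rvert^{p}\leq\sum_{\ell}\lvert (u|_{V^{\alpha}})(\ell)-(u|_{V^{\alpha'}})(\ell)\rvert^{p}$. Summing the longitudinal inequalities over all lines and the transverse inequalities over all neighbouring pairs yields $\lVert\nabla R_{e_{i}}u\rVert_{p}\leq\lVert\nabla u\rVert_{p}$.

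The main obstacle is the diagonal directions $e=\frac{e_{i}\pm e_{j}}{2}$, for which \emph{no} lattice edge is longitudinal and the transverse edges join positions whose $e$-coordinates differ by $\tfrac12$ (a ``brick-wall'' coupling interleaving a line indexed by $\mathbb{Z}$ with one indexed by $\mathbb{Z}+\tfrac12$), so the clean rank-coupling above breaks down. To treat all directions uniformly I would instead realise $R_{e}$ as a limit of \emph{polarizations}: for each $e\in\vec{E}$ the reflections across the hyperplanes perpendicular to $e$ are graph automorphisms of $\mathbb{Z}^{N}$, and the associated two-point rearrangement $u^{\sigma}$ satisfies $\lVert\nabla u^{\sigma}\rVert_{p}\leq\lVert\nabla u\rVert_{p}$ via the elementary convexity inequality
\[
\lvert a-b\rvert^{p}+\lvert a'-b'\rvert^{p}\geq\lvert a\vee a'-b\vee b'\rvert^{p}+\lvert a\wedge a'-b\wedge b'\rvert^{p}\qquad(a,a',b,b'\geq0,\ p\geq1),
\]
applied to each reflected pair of edges. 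Here the precise choice of $\vec{E}$ is essential: since $\langle e,e_{k}\rangle\in\{0,\pm\tfrac12,\pm1\}$ for every $e\in\vec{E}$, an edge changes $\langle e,\cdot\rangle$ by at most $1$, and the reflection hyperplanes can be placed so that no ``skew'' crossing edges occur (crossing edges are reflected pairs or abut the fixed hyperplane, where the displayed inequality is an equality); for a generic direction this fails and polarization could \emph{increase} the energy. I expect the principal technical difficulty to be the limit step: verifying that a single sequence of polarizations, applied simultaneously on all lines parallel to $e$, converges pointwise to $R_{e}u$, after which lower semicontinuity closes the argument exactly as in the first paragraph.
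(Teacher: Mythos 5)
First, a point of comparison you could not have known: this lemma is not proved in the paper at all. It is imported verbatim from \cite{HHH22} (Theorem 5.15), where the actual work is done, so there is no internal proof to match your argument against; what follows assesses your proposal on its own merits. Your overall architecture is sound and is the natural one: admissibility of non-negative $\ell^{p}$ functions via Chebyshev, reduction to a single one-step rearrangement $R_{e}$, monotonicity of the energy along $R^{k}u$, and edgewise Fatou using the pointwise convergence in \eqref{eq:def_of_DSR}. For coordinate directions your longitudinal/transverse split is correct, and the transverse step is genuinely nice: the rank function $r(\ell)$ is indeed the same bijection $\mathbb{Z}\to\mathbb{N}$ on every line, and the nonexpansivity of the decreasing rearrangement, $\sum_{k}\lvert f_{k}-g_{k}\rvert^{p}\leq\sum_{\ell}\lvert u(\ell)-v(\ell)\rvert^{p}$, is a standard fact. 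The two-point (polarization) inequality you display is correct --- it is exactly the submodularity of $(s,t)\mapsto\lvert s-t\rvert^{p}$ --- and your analysis of which reflections are compatible with the edge structure of $\mathbb{Z}^{N}$ (crossing edges self-paired for half-integer reflections in coordinate directions; no strict crossings at all for the diagonal directions, since edges change $\langle e,\cdot\rangle$ by at most $\tfrac12$ there) is accurate.

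There are, however, two genuine gaps. First, the one-dimensional discrete P\'olya--Szeg\"o inequality is not ``provable by adjacent transpositions'' in any straightforward sense: a single adjacent swap moving the sequence toward the symmetric order changes three consecutive differences at once and need not decrease $\sum_{\ell}\lvert f(\ell+1)-f(\ell)\rvert^{p}$, so a bubble-sort scheme requires a carefully designed swap order and a monotonicity lemma you have not supplied; this one-dimensional statement is precisely the analytic core of the cited proof and cannot be waved through. Second, the limit step you yourself flag is a real gap, and it hides two sub-issues. (i) For a diagonal direction the brick-wall partition mixes $\mathbb{Z}$-indexed and $(\mathbb{Z}+\tfrac12)$-indexed lines, so a single family of reflections can only converge to $R_{e}u$ on all lines simultaneously if both types of one-dimensional rearrangement are symmetric-decreasing about a \emph{common} center; this happens to be true (from the paper's definition, both orderings $0,1,-1,2,\dots$ and $\tfrac12,-\tfrac12,\tfrac32,\dots$ are decreasing in distance from $\langle e,x\rangle=\tfrac14$), but you never verify it, and without it the whole diagonal case collapses. (ii) Pointwise convergence of a fixed infinite sequence of polarizations to the symmetric rearrangement, for arbitrary $C_{0}^{+}$ data on infinitely many lines at once, is a theorem (a discrete analogue of Brock--Solynin/Van Schaftingen universal approximation, cf. \cite{pruss98}), not an observation; alternatively one can sidestep it by first treating finitely supported $u$ (where finitely many polarizations suffice) and then passing to general $u\in\ell^{p}$ using that $v\mapsto v^{\star}$ is $\ell^{p}$-nonexpansive and that $\lVert\nabla v\rVert_{p}\lesssim\lVert v\rVert_{\ell^{p}}$ makes the energy $\ell^{p}$-continuous --- but some such argument must actually be written. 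A cosmetic remark: since polarization handles the coordinate directions as well, your separate rank-coupling machinery, while correct, is dispensable, and a uniform polarization proof would be shorter.
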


\begin{lem}[\cite{HHH22}, Theorem 4.16]
\label{lem:compact embedding}Let $\left\{ u_{n}\right\} $ be a sequence
in $\mathcal{S}(\mathbb{Z}^{N})\cap\ell^{p}(\mathbb{Z}^{N})$ with
$\lVert u_{n}\rVert_{\ell^{p}}\leq1$ for $p\geq1$. Then for all $q>p$,
there exists a subsequence of $\left\{ u_{n}\right\} $ convergent
in $\ell^{q}(\mathbb{Z}^{N}).$
\end{lem}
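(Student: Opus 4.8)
The plan is to remove the translation-invariance obstruction (which is exactly what defeats $\ell^p$-compactness on $\mathbb{Z}^N$ in general) by exploiting that Schwarz symmetry pins the large values of each $u_n$ near the origin. Concretely, I would combine a uniform pointwise decay estimate for Schwarz symmetric functions with a uniform tail (tightness) estimate, and then run the standard scheme: extract a pointwise-convergent subsequence by diagonalization and upgrade it to $\ell^q$ convergence using the uniform tightness. The key structural input, which I take from the rearrangement construction in \cite{HHH22}, is that there is a single enumeration $\mathbb{Z}^N=\{x_1,x_2,\dots\}$, fixed and independent of the function, along which every $u\in\mathcal{S}(\mathbb{Z}^N)$ is non-increasing, i.e. $u(x_1)\ge u(x_2)\ge\cdots$. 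Equivalently, the rearrangement-invariant subsets of $\mathbb{Z}^N$ form a nested family $B_1\subset B_2\subset\cdots$ with $|B_m|=m$, and the superlevel sets of any $u=u^{\star}$ lie in this family; in particular $u(x_k)$ is the $k$-th largest value of $u$.

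Granting this, the pointwise decay is immediate: for non-negative $u$ with $\lVert u\rVert_{\ell^p}\le 1$, monotonicity along the enumeration gives $k\,u(x_k)^p\le\sum_{j\le k}u(x_j)^p\le\lVert u\rVert_{\ell^p}^p\le 1$, hence $u(x_k)\le k^{-1/p}$. For $q>p$ this produces the crucial uniform tail bound
\[
\sum_{k>M}u(x_k)^q\le\Big(\sup_{k>M}u(x_k)\Big)^{q-p}\sum_{k}u(x_k)^p\le M^{-(q-p)/p},
\]
which tends to $0$ as $M\to\infty$, uniformly over all Schwarz symmetric functions in the unit $\ell^p$-ball, and in particular over the whole sequence $\{u_n\}$.

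For the compactness itself, I would first note $|u_n(x)|\le\lVert u_n\rVert_{\ell^p}\le 1$ at every vertex, so a diagonal argument over the countable set $\{x_k\}$ yields a subsequence $\{u_{n_j}\}$ and a function $u$ with $u_{n_j}(x_k)\to u(x_k)$ for each $k$. The limit inherits $u(x_k)\le k^{-1/p}$, so $u\in\ell^q(\mathbb{Z}^N)$ and obeys the same tail bound. Splitting
\[
\lVert u_{n_j}-u\rVert_{\ell^q}^q=\sum_{k\le M}|u_{n_j}(x_k)-u(x_k)|^q+\sum_{k>M}|u_{n_j}(x_k)-u(x_k)|^q,
\]
the finite sum tends to $0$ as $j\to\infty$ by pointwise convergence, while the tail is bounded by $2^{q}M^{-(q-p)/p}$ uniformly in $j$. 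Taking $\limsup_j$ and then letting $M\to\infty$ forces $\lVert u_{n_j}-u\rVert_{\ell^q}\to 0$, which is the assertion.

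I expect the only genuine obstacle to be the structural fact used in the first step, namely that a single fixed enumeration — equivalently one nested family of symmetric sets — serves all Schwarz symmetric functions simultaneously; everything after that is soft analysis. This is precisely the geometry of the discrete rearrangement in \cite{HHH22}, and it is indispensable: without it, mass could a priori be reshuffled within a single ``shell'' from one $u_n$ to the next, breaking both tightness and the pointwise-to-$\ell^q$ upgrade. An alternative that avoids asserting a perfectly canonical order is to localize to metric balls $\{d(x)\le R\}$ and argue instead that a symmetric superlevel set reaching distance $R$ must contain $\gtrsim R^N$ vertices, whence $u(x)\lesssim R^{-N/p}$ for $d(x)>R$ and the tail $\sum_{d(x)>R}u(x)^q\lesssim R^{-N(q-p)/p}$ is again uniformly small; this version rests on the comparability of the rearrangement's canonical balls with metric balls, so the same input from \cite{HHH22} is still doing the essential work.
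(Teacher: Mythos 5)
First, note that the paper contains no proof of this lemma: it is imported verbatim from \cite{HHH22} (Theorem 4.16), so your proposal can only be compared with the cited source. Your analytic skeleton is correct and is essentially the mechanism used there: a uniform pointwise decay estimate on the unit $\ell^p$-ball of $\mathcal{S}(\mathbb{Z}^N)$, the interpolation tail bound $\sum_{\text{tail}}u^q\le(\sup_{\text{tail}}u)^{q-p}\lVert u\rVert_{\ell^p}^p$, a diagonal extraction using $\lvert u_n(x)\rvert\le\lVert u_n\rVert_{\ell^p}\le 1$, and the finite-part/tail splitting that upgrades pointwise convergence to $\ell^q$ convergence. All of that is sound as written.

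The one genuine weak point is the structural input you lead with: that a \emph{single} enumeration $\mathbb{Z}^N=\{x_1,x_2,\dots\}$, independent of the function, makes every Schwarz symmetric $u$ non-increasing, equivalently that there is one canonical nested family $B_1\subset B_2\subset\cdots$ with $\lvert B_m\rvert=m$ containing all level sets. This is true for $N=1$ (the zig-zag order $0,1,-1,2,-2,\dots$), but for $N\ge 2$ the rearrangement construction only forces monotonicity along lines parallel to the admissible directions $e_i$, $(e_i\pm e_j)/2$, and the transitive closure of these line-wise constraints is merely a partial order: in $\mathbb{Z}^2$, for instance, $(1,1)$ and $(-1,0)$ lie on no common admissible line and are not compared by any chain of such constraints, so no canonical total enumeration is supplied by the construction (nor asserted in \cite{HHH22}). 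Fortunately your argument never needs the full strength of that claim: the bound $u(x_k)\le k^{-1/p}$ is only used to show that a point of a superlevel set at distance $R$ from the origin forces the set to contain $\gtrsim R^N$ vertices, i.e. the comparability of symmetric level sets with metric balls — which is precisely the fallback you state at the end, and is the fact actually established and used in \cite{HHH22}. Substituting it gives $u(x)\lesssim(1+d(x))^{-N/p}$ uniformly over the sequence, hence the uniform tail estimate $\sum_{d(x)>R}u(x)^q\lesssim R^{-N(q-p)/p}$, and the remainder of your proof closes without change. So: correct in essence and aligned with the cited proof, provided you drop the canonical-enumeration claim and run the ball-comparability version.
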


\section{The discrete CKN inequality }

We introduce an extension method to extend a function $u$ defined
on $\mathbb{Z}^{N}$ to $\bar{u}$ defined on $\mathbb{R}^{N}$, and
a similar method was introduced in \cite[Subsection 4.1]{RS09}. Then we prove an equivalence of spaces.
\subsection{The discrete CKN inequality}
Given any $u\in C(\mathbb{Z}^{N})$, in every elementary cubic cell $Q$, each point $x\in Q$ can be expressed uniquely as a convex combination of its $2^{N}$ vertices $v^{i}$, i.e. $x=\sum_{i=1}^{2N} c_{i}(x)v^{i}$ with $c_{i}(x)\geq 0$ and $\sum_{i=1}^{2N} c_{i}(x)=1$. Then we define a piecewise linear function $\bar{u}\in C(\mathbb{R}^{N})$ as
\begin{equation} 
	\bar{u}(x)=\sum_{i=1}^{2N} c_{i}(x)u(v^{i}).
	\label{eq:extension def}
\end{equation}
Let $\mathcal{L}(Q)$ be the space of such piecewise linear functions $\bar{u}$ on the cell $Q$. Clearly, dim$\mathcal{L}(Q)=2^{N}$.
By the definition of $\bar{u}$, we get the following two estimates
of $\bar{u}$ and $u$ .
\begin{lem}
\label{lem: estimate of norm}If $u\in\ell^{p}(\mathbb{Z}^{N})$ for
$p\geq 1$, then $\bar{u}\in L^{p}(\mathbb{R}^{N})$ and their norms
 equivalent: 
\begin{equation}
\lVert\bar{u}\rVert_{L^{p}(\mathbb{R}^{N})}\thickapprox\lVert u\rVert_{\ell^{p}(\mathbb{Z}^{N})}.\label{eq:norms equivalent}
\end{equation}
\end{lem}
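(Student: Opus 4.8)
The plan is to compare $\|\bar{u}\|_{L^{p}(\mathbb{R}^{N})}^{p}$ with $\|u\|_{\ell^{p}(\mathbb{Z}^{N})}^{p}$ cell by cell, exploiting the three structural features of the extension \eqref{eq:extension def}: on each elementary cube $Q$ the function $\bar{u}|_{Q}$ depends linearly on the $2^{N}$ vertex values $u(v^{i})$, it interpolates them (so $\bar{u}(v^{i})=u(v^{i})$), and all cells are translates of the reference cube $Q_{0}=[0,1]^{N}$. Since the cells overlap only on their boundaries, which form a Lebesgue-null set, I would write $\|\bar{u}\|_{L^{p}(\mathbb{R}^{N})}^{p}=\sum_{Q}\int_{Q}|\bar{u}(x)|^{p}\,dx$ and estimate each integral against $\sum_{v^{i}\in Q}|u(v^{i})|^{p}$ from both sides.

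For the upper bound I would use the convexity of $t\mapsto|t|^{p}$ for $p\geq1$. On each cell $\bar{u}(x)=\sum_{i}c_{i}(x)u(v^{i})$ is a convex combination, so Jensen's inequality gives $|\bar{u}(x)|^{p}\leq\sum_{i}c_{i}(x)|u(v^{i})|^{p}$. Integrating over $Q$ and using $\int_{Q}c_{i}\leq|Q|=1$ yields $\int_{Q}|\bar{u}|^{p}\leq\sum_{v^{i}\in Q}|u(v^{i})|^{p}$. Summing over all cells and noting that each vertex of $\mathbb{Z}^{N}$ is a corner of exactly $2^{N}$ cells gives $\|\bar{u}\|_{L^{p}(\mathbb{R}^{N})}^{p}\leq 2^{N}\|u\|_{\ell^{p}(\mathbb{Z}^{N})}^{p}$; in particular this already shows $\bar{u}\in L^{p}(\mathbb{R}^{N})$.

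For the lower bound, which I expect to be the real point, I would argue by finite-dimensional norm equivalence on the reference cube. The space $\mathcal{L}(Q_{0})$ is isomorphic to $\mathbb{R}^{2^{N}}$ via the vertex values, and on it both $\bigl(\sum_{i}|u(v^{i})|^{p}\bigr)^{1/p}$ and $\|\bar{u}\|_{L^{p}(Q_{0})}$ are norms. The latter is nondegenerate: if $\bar{u}\equiv 0$ a.e. on $Q_{0}$, then continuity of $\bar{u}$ together with the interpolation identity $\bar{u}(v^{i})=u(v^{i})$ forces every vertex value to vanish. Since all norms on a finite-dimensional space are equivalent, there is a constant $c_{N,p}>0$ with $\|\bar{u}\|_{L^{p}(Q_{0})}^{p}\geq c_{N,p}\sum_{i}|u(v^{i})|^{p}$. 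Translation invariance of the lattice structure and of Lebesgue measure transports this inequality to every cell with the \emph{same} constant, and summing over cells (each vertex appearing in at least one cell) gives $\|\bar{u}\|_{L^{p}(\mathbb{R}^{N})}^{p}\geq c_{N,p}\|u\|_{\ell^{p}(\mathbb{Z}^{N})}^{p}$.

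Combining the two estimates yields $\|\bar{u}\|_{L^{p}(\mathbb{R}^{N})}\thickapprox\|u\|_{\ell^{p}(\mathbb{Z}^{N})}$ with constants depending only on $N$ and $p$, as claimed. The only delicate point is the uniformity of the lower-bound constant across all cells, which is precisely what the reduction to a single reference-cube computation plus translation invariance provides; the nondegeneracy of $\|\cdot\|_{L^{p}(Q_{0})}$ on the finite-dimensional space $\mathcal{L}(Q_{0})$ is what makes the abstract norm-equivalence argument applicable.
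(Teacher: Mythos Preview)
Your proposal is correct and follows essentially the same strategy as the paper: reduce to a single cell, invoke finite-dimensional norm equivalence on $\mathcal{L}(Q_{0})\cong\mathbb{R}^{2^{N}}$, and sum over translates. The only minor difference is that you treat the upper bound separately via Jensen's inequality, yielding the explicit constant $2^{N}$, whereas the paper handles both directions at once by the abstract norm-equivalence argument on the reference cube.
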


\begin{proof}
By the argument above, on $\mathcal{L}(Q)$ we consider two norms:
\[
\lVert u\rVert_{\ell^{p}(Q)}=\left(\underset{y\in\left\{ 0,1\right\} ^{N}}{\sum}\lvert\bar{u}(y)\rvert^{p}\right)^{1/p},
\]
\[
\lVert\bar{u}\rVert_{L^{p}(Q)}=\left(\int_{Q}|\bar{u}(x)|^{p}\text{d}x\right)^{1/p}.
\]
Then $\lVert u\rVert_{\ell^{p}(Q)}\thickapprox\lVert\bar{u}\rVert_{L^{p}(Q)}$ by the equivalence of norms in a finite dimensional linear space.
By summing similar inequalities over all cells $Q+y$ with $y\in\mathbb{Z}^{N}$,
we obtain (\ref{eq:norms equivalent}). Note that the extension operator is linear and Lipschitz continuous with respect to norms $\ell^{p}$ and $L^{p}$. Since $C_{c}(\mathbb{Z}^{N})$
is dense in $\ell^{p}(\mathbb{Z}^{N})$, we have $\bar{u}\in L^{p}(\mathbb{R}^{N})$ for any $u\in\ell^{p}(\mathbb{Z}^{N})$.
\end{proof}
\begin{lem}
\label{lem: estimate of energy}If $u\in D_{0}^{1,p}(\mathbb{Z}^{N})$
for $p\geq 1$, then $\bar{u}\in D_{0}^{1,p}(\mathbb{R}^{N})$ which is the
completion of $C_{c}^{\infty}(\mathbb{R}^{N})$ with $\lVert\nabla\cdot\rVert_{L^{p}}$
norm, and their $p$-Sobolev energy equivalent: 
\begin{equation}
\lVert\nabla\bar{u}\rVert_{L^{p}(\mathbb{R}^{N})}\thickapprox\lVert u\rVert_{D^{1,p}(\mathbb{Z}^{N})}.\label{eq: p-sobolev energy equivalent}
\end{equation}
\end{lem}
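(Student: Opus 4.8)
The plan is to establish the energy equivalence \eqref{eq: p-sobolev energy equivalent} in exactly the spirit of Lemma~\ref{lem: estimate of norm}: reduce everything to a single reference cell by a finite-dimensional equivalence-of-norms argument, sum the local estimates over all cells, and then pass to the completions by density. First I would fix the reference cube $Q=[0,1]^N$ with its $2^N$ vertices and, on the $2^N$-dimensional space $\mathcal{L}(Q)$ of admissible interpolants $\bar u$, introduce the two local quantities
\[
q_1(\bar u):=\left(\int_Q |\nabla \bar u(x)|^p\,\mathrm{d}x\right)^{1/p}, \qquad q_2(\bar u):=\left(\sum_{\{v^i,v^j\}}|u(v^i)-u(v^j)|^p\right)^{1/p},
\]
where the second sum runs over the edges $\{v^i,v^j\}$ of the cube $Q$. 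Both are seminorms on $\mathcal{L}(Q)$ (positive homogeneity and subadditivity follow from $\nabla(\bar u+\bar v)=\nabla\bar u+\nabla\bar v$ together with Minkowski's inequality), and both are finite since each $\bar u$ is a fixed polynomial on $Q$ with bounded gradient.

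The key local claim is that $q_1\thickapprox q_2$ on $\mathcal{L}(Q)$, with constants depending only on $N$ and $p$. Unlike in Lemma~\ref{lem: estimate of norm}, these are only seminorms, so I cannot quote equivalence of norms on $\mathcal{L}(Q)$ directly; instead I would first identify their kernels. Indeed $q_1(\bar u)=0$ forces $\nabla\bar u=0$ a.e.\ on the connected set $Q$, hence $\bar u$ constant; and $q_2(\bar u)=0$ forces $u(v^i)=u(v^j)$ along every edge of the cube, whence all vertex values agree because the $1$-skeleton of the cube is connected, so again $\bar u$ is constant. Thus $q_1$ and $q_2$ descend to genuine norms on the finite-dimensional quotient $\mathcal{L}(Q)/\mathbb{R}$, where all norms are equivalent, giving $q_1\thickapprox q_2$; the comparison constants are independent of the cell since every cell is a lattice translate of $Q$.

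Next I would globalize. Since the interpolant $\bar u$ is continuous on $\mathbb{R}^N$ and polynomial in each cell interior, it is locally Lipschitz, so $\nabla\bar u$ exists a.e.\ with no singular contribution on the measure-zero cell interfaces and $\lVert\nabla\bar u\rVert_{L^p(\mathbb{R}^N)}^p=\sum_Q q_1(\bar u|_Q)^p$. Summing the local equivalence over all cells $Q+y$, $y\in\mathbb{Z}^N$, and using that each lattice edge is shared by exactly $2^{N-1}$ cubes, the discrete side telescopes to
\[
\sum_Q q_2(\bar u|_Q)^p=2^{N-1}\sum_{\{x,y\}\in E}|u(y)-u(x)|^p=2^{N-2}\,\lVert u\rVert_{D^{1,p}(\mathbb{Z}^N)}^p,
\]
the last equality because the defining double sum for $\lVert u\rVert_{D^{1,p}}^p$ counts each edge twice. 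This yields \eqref{eq: p-sobolev energy equivalent} for every finitely supported $u$ and shows the extension operator $u\mapsto\bar u$ is linear and bounded from $(C_c(\mathbb{Z}^N),\lVert\cdot\rVert_{D^{1,p}})$ into $(D_0^{1,p}(\mathbb{R}^N),\lVert\nabla\cdot\rVert_{L^p})$; here $\bar u\in D_0^{1,p}(\mathbb{R}^N)$ because for $u\in C_c(\mathbb{Z}^N)$ the interpolant is a compactly supported Lipschitz function, hence approximable in $\lVert\nabla\cdot\rVert_{L^p}$ by $C_c^\infty(\mathbb{R}^N)$ via mollification. Finally, as $C_c(\mathbb{Z}^N)$ is dense in $D_0^{1,p}(\mathbb{Z}^N)$ by definition, boundedness extends the estimate to all $u\in D_0^{1,p}(\mathbb{Z}^N)$ and gives $\bar u\in D_0^{1,p}(\mathbb{R}^N)$ with the asserted equivalence, just as norms were passed to the limit in Lemma~\ref{lem: estimate of norm}.

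The main obstacle is precisely the local claim $q_1\thickapprox q_2$: because both energies are seminorms rather than norms, the verbatim finite-dimensional argument of Lemma~\ref{lem: estimate of norm} does not apply, and one must first pin down that the two kernels coincide with the constants before quotienting by $\mathbb{R}$. The remaining ingredients—continuity of $\bar u$ across faces so that the $L^p$ gradient splits into a sum of cell integrals, the $2^{N-1}$ edge-multiplicity bookkeeping, and the density/mollification step—are routine and parallel the preceding lemma.
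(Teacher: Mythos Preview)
Your proposal is correct and follows essentially the same approach as the paper: the paper's proof also passes to the quotient $\bar{\mathcal L}(Q)=\mathcal L(Q)/\mathcal I(Q)$ by the constants, invokes equivalence of norms on this finite-dimensional space to get the local estimate, sums over cells, and then extends by density of $C_c(\mathbb Z^N)$. Your write-up is simply more explicit about the kernel identification, the $2^{N-1}$ edge multiplicity, and the mollification step for membership in $D_0^{1,p}(\mathbb R^N)$, all of which the paper leaves implicit.
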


\begin{proof}
Define $\mathcal{I}(Q)$ as the space of constant
functions on $Q$. Let $\mathcal{\bar{L}}(Q)\coloneqq\mathcal{L}(Q)/\mathcal{I}(Q)$,
and we consider two norms in $\mathcal{\bar{L}}(Q)$: 
\[
\lVert u\rVert_{D^{1,p}(Q)}=\left(\underset{\underset{x,y\in\left\{ 0,1\right\} ^{N}}{x\sim y}}{\sum}\lvert\bar{u}(y)-\bar{u}(x)\rvert^{p}\right)^{1/p},
\]
\[
\lVert\nabla\bar{u}\rVert_{L^{p}(Q)}=\left(\int_{Q}|\nabla\bar{u}(x)|^{p}\text{d}x\right)^{1/p}.
\]
By the equivalence of norms in the finite dimensional linear space $\mathcal{\bar{L}}(Q)$, we get $\lVert\nabla\bar{u}\rVert_{L^{p}(Q)}\thickapprox\lVert u\rVert_{D^{1,p}(Q)}$.
By summing similar inequalities over all cells $Q+y$ with $y\in\mathbb{Z}^{N}$,
we obtain (\ref{eq: p-sobolev energy equivalent}). Note that the extension operator is linear and Lipschitz continuous with respect to norms $\ell^{p}$ and $L^{p}$. 
Since $C_{c}(\mathbb{Z}^{N})$ is dense in $D_{0}^{1,p}(\mathbb{Z}^{N})$, then $\bar{u}\in D_{0}^{1,p}(\mathbb{R}^{N})$ for any $u\in D_{0}^{1,p}(\mathbb{Z}^{N})$.
\end{proof}

Then we introduce the CKN inequalities with non-homogeneous weights in $\mathbb{R}^N$ \cite{MR4593125}.
\begin{thm}[CKN inequalities of non-homogeneous weights; \cite{MR4593125}, Theorem 1.7]
If $p,q,r>1,0\leq\theta\leq1$, $a\in (-N/p,N/p')$, $c\in (-N/r,N/r')$,
$b>-N/q$, $0\leq\theta a+(1-\theta)c-b\leq \theta$ and $$\theta(\frac{1}{p}-\frac{1}{N})+\frac{1-\theta}{r}\leq \frac{1}{q}\leq \theta (\frac{1}{p}+\frac{a-1}{N})+(1-\theta)(\frac{1}{r}+\frac{c}{N})-\frac{b}{N},$$ then it follows that 
\begin{equation}
\parallel \langle x \rangle ^b u\parallel_{L^{q}}\leq C(a,b,c,p,q,r,\theta,N)\parallel \langle x \rangle ^a u\parallel_{D^{1,p}}^{\theta}\parallel \langle x \rangle ^c u\parallel_{L^{r}}^{1-\theta},\:\forall u\in \mathbb{S}(\mathbb{R}^{N}), \label{eq: CKN inequalities of non-homogeneous weights}
\end{equation}
where $\langle x \rangle ^a=(1+|x|^2)^{\frac{a}{2}}$ and $\mathbb{S}(\mathbb{R}^{N})$ is the space of Schwartz functions (i.e. smooth, rapidly decreasing function).
\end{thm}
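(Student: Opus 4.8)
The plan is to prove (\ref{eq: CKN inequalities of non-homogeneous weights}) by a dyadic annular decomposition adapted to the two regimes of the non-homogeneous weight $\langle x\rangle^{s}=(1+|x|^{2})^{s/2}$: one has $\langle x\rangle^{s}\approx|x|^{s}$ for $|x|\gtrsim1$ (homogeneous-like at infinity) and $\langle x\rangle^{s}\approx1$ for $|x|\lesssim1$ (no singularity at the origin). The first step is the elementary comparison $\langle x\rangle^{s}\approx\max(1,2^{k})^{s}$ on the dyadic annulus $A_{k}=\{2^{k}\le|x|<2^{k+1}\}$, $k\in\mathbb{Z}$, so that each weight is essentially constant on $A_{k}$, equal to $2^{ks}$ for $k\ge0$ and to $1$ for $k\le0$.

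A useful preliminary observation is that the right-hand side of (\ref{eq: CKN inequalities of non-homogeneous weights}) does not depend on $q$. Hence the admissible exponents form an interval, and the inequality for intermediate $q$ would already follow from the two endpoints $q=q^{*}$ and $q=q_{\mathrm{GN}}$ by log-convexity of the $L^{q}$-norms applied to the single function $\langle x\rangle^{b}u$, where $\tfrac{1}{q^{*}}=\theta(\tfrac1p+\tfrac{a-1}{N})+(1-\theta)(\tfrac1r+\tfrac cN)-\tfrac bN$ and $\tfrac{1}{q_{\mathrm{GN}}}=\theta(\tfrac1p-\tfrac1N)+\tfrac{1-\theta}{r}$. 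The hypothesis $0\le\theta a+(1-\theta)c-b$ is exactly equivalent to $\tfrac{1}{q_{\mathrm{GN}}}\le\tfrac{1}{q^{*}}$, so the interval is nonempty, and its two ends will be seen to be governed by the weight near the origin and near infinity. Rather than isolate the endpoints, I would prove a single per-annulus estimate valid for all admissible $q$ at once.

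The core step is a scaling estimate on each $A_{k}$. Writing $x=2^{k}y$ and $v(y)=u(2^{k}y)$ with $y$ in the fixed unit annulus $A_{0}=\{1\le|y|<2\}$, the norms rescale by explicit powers: $\|u\|_{L^{q}(A_{k})}=2^{kN/q}\|v\|_{L^{q}(A_{0})}$, $\|\nabla u\|_{L^{p}(A_{k})}=2^{k(N/p-1)}\|\nabla v\|_{L^{p}(A_{0})}$ and $\|u\|_{L^{r}(A_{k})}=2^{kN/r}\|v\|_{L^{r}(A_{0})}$. Applying a fixed weight-free Gagliardo--Nirenberg interpolation inequality on the compact annulus $A_{0}$ (which avoids the origin) and reinserting the constant weight values, one obtains, for $k\ge0$,
\[
\|\langle x\rangle^{b}u\|_{L^{q}(A_{k})}\lesssim 2^{kN(\frac1q-\frac1{q^{*}})}\,\|\langle x\rangle^{a}\nabla u\|_{L^{p}(\tilde A_{k})}^{\theta}\,\|\langle x\rangle^{c}u\|_{L^{r}(\tilde A_{k})}^{1-\theta},
\]
and for $k\le0$ the same bound with prefactor $2^{kN(\frac1q-\frac1{q_{\mathrm{GN}}})}$, where $\tilde A_{k}\supset A_{k}$ is the mild enlargement needed to accommodate the gradient and the $\tilde A_{k}$ overlap with bounded multiplicity. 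The decisive point is that $\tfrac1q\le\tfrac1{q^{*}}$ makes the exterior prefactor decay geometrically as $k\to+\infty$, while $\tfrac1q\ge\tfrac1{q_{\mathrm{GN}}}$ makes the interior prefactor decay as $k\to-\infty$; this is exactly the mechanism by which the admissible range of $q$ is forced by the behavior of the weight at infinity and at the origin.

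Finally, I would sum over $k\in\mathbb{Z}$. Using $\|\langle x\rangle^{b}u\|_{L^{q}(\mathbb{R}^{N})}^{q}=\sum_{k}\|\langle x\rangle^{b}u\|_{L^{q}(A_{k})}^{q}$ and the per-annulus bounds, the geometric prefactors make the series summable, and a H\"older inequality in the index $k$ (with exponents matched to $\theta q/p$ and $(1-\theta)q/r$), together with the bounded overlap of the $\tilde A_{k}$, recombines the localized right-hand sides into the global product $\|\langle x\rangle^{a}\nabla u\|_{L^{p}}^{\theta}\|\langle x\rangle^{c}u\|_{L^{r}}^{1-\theta}$. I expect the main obstacle to be this summation together with the lower-order terms produced by the local Gagliardo--Nirenberg inequality: since $A_{0}$ is bounded, the gradient alone cannot control $\|v\|_{L^{q}(A_{0})}$, so an additional term such as $\|v\|_{L^{r}(A_{0})}$ appears, and one must verify that after rescaling and summation it is still dominated by the weighted $L^{r}$-part of the right-hand side. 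This is where the ranges $a\in(-N/p,N/p')$, $c\in(-N/r,N/r')$ and the upper constraint $\theta a+(1-\theta)c-b\le\theta$ enter, guaranteeing that the relevant exponents remain admissible (in particular $q\le q_{\mathrm{GN}}\le p^{*}$) and that the local interpolation inequality holds; controlling these lower-order contributions uniformly in the sign of the weights is the most delicate part.
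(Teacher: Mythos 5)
You should first note a mismatch of expectations: the paper does \emph{not} prove this statement at all. It is imported verbatim as Theorem 1.7 of Duarte--Silva \cite{MR4593125} and used as a black box (the paper's only original continuous--discrete work is the norm-equivalence Lemmas \ref{lem: estimate of norm}--\ref{lem: estimate of energy}). So there is no internal proof to compare against, and your attempt has to stand on its own as a proof of the cited result.

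Judged on its own, your outline has a genuine gap, and it sits exactly where you flagged ``the most delicate part.'' The dyadic decomposition, the scaling computation, the prefactor $2^{kN(\frac1q-\frac1{q^{*}})}$ for the leading term, and the H\"older-in-$k$ recombination are all sound (the latter works because $\theta a+(1-\theta)c-b\le\theta$ is equivalent to $\frac1q\le\frac1{q^{*}}\le\frac{\theta}{p}+\frac{1-\theta}{r}$). But on the compact annulus $A_{0}$ the multiplicative Gagliardo--Nirenberg inequality fails for constants, so your local inequality necessarily reads $\lVert v\rVert_{L^{q}(A_{0})}\lesssim\lVert\nabla v\rVert_{L^{p}}^{\theta}\lVert v\rVert_{L^{r}}^{1-\theta}+\lVert v\rVert_{L^{r}}$, and the additive term is fatal to the plan as stated, for two reasons. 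First, after rescaling, its contribution on $A_{k}$, $k\ge0$, carries the prefactor $2^{k[N(\frac1q-\frac1r)+b-c]}$, whose exponent is \emph{not} sign-definite under the hypotheses: it is positive whenever $\frac1q+\frac bN>\frac1r+\frac cN$, which is compatible with $\frac1q\le\frac1{q^{*}}$ as soon as $\frac1p+\frac{a-1}{N}>\frac1r+\frac cN$ (e.g.\ $N=2$, $p=2$, $a$ close to $N/p'$, $r=4$, $c=0$, $\theta$ close to $1$, $q$ close to $q^{*}$); then the sum over outer annuli does not converge geometrically at all. Second, even when the exponent is negative, summation yields an extra additive term of the form $\lVert\langle x\rangle^{c}u\rVert_{L^{r}}$ at \emph{full} power, and this cannot be absorbed into the product $\lVert\langle x\rangle^{a}\nabla u\rVert_{L^{p}}^{\theta}\lVert\langle x\rangle^{c}u\rVert_{L^{r}}^{1-\theta}$: with non-homogeneous weights there is no dilation group allowing you to normalize the two right-hand factors independently (the paper itself emphasizes this loss of scaling in the remark after Theorem \ref{thm:main2}), and a Poincar\'e-type bound $\lVert\langle x\rangle^{c}u\rVert_{L^{r}}\lesssim\lVert\langle x\rangle^{a}\nabla u\rVert_{L^{p}}$ is false under the stated hypotheses, so $\lVert B\rVert\lesssim\lVert G\rVert^{\theta}\lVert C\rVert^{1-\theta}+\lVert C\rVert$ is strictly weaker than the theorem. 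The missing idea is a mechanism coupling the annuli rather than estimating each one separately: replace the local GN inequality by the Sobolev--Poincar\'e inequality $\lVert v-\bar v_{k}\rVert_{L^{q}(A_{0})}\lesssim\lVert\nabla v\rVert_{L^{p}}$ and control the annular averages $\bar v_{k}$ by telescoping outward, using $\bar v_{k}\to0$ for Schwartz $u$ and a discrete Hardy-type summation; this is what regenerates the genuine product structure with the $\theta$ versus $1-\theta$ split, and it (or an equivalent device, such as the interpolation scheme actually used in \cite{MR4593125}) is what your sketch lacks.
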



Then we are ready to prove the discrete version of CKN inequality.

\begin{proof}[Proof of Theorem~\ref{thm:discrete_CKN_inequality}]

By the equivalence in Lemma \ref{lem: estimate of norm}, Lemma \ref{lem: estimate of energy} and the CKN inequality
(\ref{eq: CKN inequalities of non-homogeneous weights}) with $q=q^{\ast}$, 
 we get the discrete CKN inequalities (\ref{eq:CKN inequality}). By incorporating the discrete nature, we expand the range of the parameters.
\end{proof}




\subsection{The equivalence of Sobolev spaces}
Consider the case $\theta=1$ and $0\leq a-b\leq1$, as our previous work \cite{HLM22} we define
\[
D_{a}^{1,p}(\mathbb{Z}^{N})\coloneqq\left\{ u\in\ell_{b}^{q^{\ast}}(\mathbb{Z}^{N}):\lVert u\rVert_{D_{a}^{1,p}(\mathbb{Z}^{N})}<\infty\right\} ,\] 
where $q^{\ast}=\frac{Np}{N-p+p(a-b)}$ which is equivalent to (\ref{eq:condition-3(balance)}). Note that $D_{a,0}^{1,p}(\mathbb{Z}^{N})$ is
the completion of finitely supported functions in the $D_{a}^{1,p}$
norm.
Next we prove that the two spaces are equivalent.


\begin{thm}
\label{thm:the equivalence for Z^N_k=00003D00003D00003D1}If $N\geq 1,p>1, \frac{1}{p}+\frac{a}{N}>0, 0\leq a-b\leq 1, q^{\ast}=\frac{Np}{N-p+p(a-b)}>1$, then 
\[
D_{a,0}^{1,p}(\mathbb{Z}^{N})=D_{a}^{1,p}(\mathbb{Z}^{N}).
\]
\end{thm}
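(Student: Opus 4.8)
The plan is to prove the two inclusions separately. The inclusion $D_{a,0}^{1,p}(\mathbb{Z}^{N})\subseteq D_{a}^{1,p}(\mathbb{Z}^{N})$ is the soft direction and follows directly from the CKN inequality already established in Theorem~\ref{thm:discrete_CKN_inequality}; the reverse inclusion $D_{a}^{1,p}(\mathbb{Z}^{N})\subseteq D_{a,0}^{1,p}(\mathbb{Z}^{N})$ is the substantial one and requires approximating a general $u\in\ell_{b}^{q^{\ast}}$ of finite $D_{a}^{1,p}$ energy by finitely supported functions.

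For the easy inclusion, I would first note that on $C_{c}(\mathbb{Z}^{N})$ the $D_{a}^{1,p}$ seminorm is a genuine norm, since a finitely supported function all of whose differences vanish is identically zero. Specializing Theorem~\ref{thm:discrete_CKN_inequality} to $\theta=1$, $q=q^{\ast}$ gives $\lVert u\rVert_{\ell_{b}^{q^{\ast}}}\le C\lVert u\rVert_{D_{a}^{1,p}}$ for all $u\in C_{c}(\mathbb{Z}^{N})$. Hence any sequence that is Cauchy in the $D_{a}^{1,p}$ norm is also Cauchy in $\ell_{b}^{q^{\ast}}$, and therefore converges pointwise, because at each fixed vertex the value is controlled by the weighted norm. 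Using this pointwise limit together with Fatou's lemma applied to the differences, one identifies the abstract completion $D_{a,0}^{1,p}$ with an honest space of functions that lie in $\ell_{b}^{q^{\ast}}$ and have finite $D_{a}^{1,p}$ norm; this is precisely $D_{a,0}^{1,p}\subseteq D_{a}^{1,p}$.

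For the reverse inclusion, fix $u\in D_{a}^{1,p}(\mathbb{Z}^{N})$, so $u\in\ell_{b}^{q^{\ast}}$ and $\lVert u\rVert_{D_{a}^{1,p}}<\infty$. I would introduce the Lipschitz cutoffs $\eta_{R}$ equal to $1$ on $\{d(x)\le R\}$, equal to $0$ on $\{d(x)\ge 2R\}$, and linear in $d(x)$ in between, so that $\eta_{R}u\in C_{c}(\mathbb{Z}^{N})$ and $\lvert\nabla_{xy}\eta_{R}\rvert\le 1/R$. The goal is to show $\lVert u-\eta_{R}u\rVert_{D_{a}^{1,p}}\to 0$ as $R\to\infty$. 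Using the discrete product rule
\[
\nabla_{xy}\big((1-\eta_{R})u\big)=(1-\eta_{R}(y))\,\nabla_{xy}u-u(x)\,\nabla_{xy}\eta_{R}
\]
and the elementary bound $(A+B)^{p}\lesssim A^{p}+B^{p}$, the energy $\lVert u-\eta_{R}u\rVert_{D_{a}^{1,p}}^{p}$ splits into a \emph{tail} term dominated by $\sum_{d(x)\ge R-1}\sum_{y\sim x}\mu_{ap}(x)\lvert\nabla_{xy}u\rvert^{p}$, and a \emph{cutoff} term dominated by $R^{-p}\sum_{R\le d(x)\le 2R}\mu_{ap}(x)\lvert u(x)\rvert^{p}$. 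The tail term vanishes as $R\to\infty$ since it is the tail of the convergent series $\lVert u\rVert_{D_{a}^{1,p}}^{p}$.

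The main obstacle is the cutoff term, and it is here that the hypotheses $0\le a-b\le 1$ and the balance $q^{\ast}=\frac{Np}{N-p+p(a-b)}$ enter decisively. When $a-b<1$ (so $q^{\ast}>p$) I would apply Hölder's inequality on the annulus with exponents $q^{\ast}/p$ and $q^{\ast}/(q^{\ast}-p)$, splitting $\mu_{ap}(x)\lvert u(x)\rvert^{p}\approx (d(x))^{(a-b)p}\cdot\big((d(x))^{b}\lvert u(x)\rvert\big)^{p}$. The first Hölder factor reproduces $\big(\sum_{R\le d(x)\le 2R}\mu_{bq^{\ast}}\lvert u\rvert^{q^{\ast}}\big)^{p/q^{\ast}}$, a tail of $\lVert u\rVert_{\ell_{b}^{q^{\ast}}}^{p}$ that tends to $0$; the second factor, a sum of $(d(x))^{(a-b)p\,q^{\ast}/(q^{\ast}-p)}$ over the $\approx R^{N}$ points of the annulus, grows like $R^{(a-b)p+N(q^{\ast}-p)/q^{\ast}}$. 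Dividing by $R^{p}$ and using $Np/q^{\ast}=N-p+p(a-b)$, the power of $R$ collapses to exactly $0$, so the cutoff term is bounded by a constant times the $\ell_{b}^{q^{\ast}}$-tail and hence vanishes. The borderline case $a-b=1$ gives $q^{\ast}=p$ and is even simpler: there $(d(x))^{ap}=(d(x))^{p}(d(x))^{bp}$, the factor $R^{-p}$ is absorbed, and the cutoff term is directly the $\ell_{b}^{p}=\ell_{b}^{q^{\ast}}$-tail of $u$. In both cases $\lVert u-\eta_{R}u\rVert_{D_{a}^{1,p}}\to 0$, giving $u\in D_{a,0}^{1,p}$ and completing the proof.
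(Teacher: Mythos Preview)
Your argument is correct and is a genuinely different, somewhat more elementary, route than the paper's. The easy inclusion via the CKN inequality is the same in both. For the hard inclusion, the paper uses a \emph{logarithmic} cutoff
\[
\eta(x)=1\wedge\frac{\log R-\log|x|}{\log R-\log r}\vee 0,
\]
applies H\"older globally to obtain the bound $\lVert\nabla\eta\rVert_{\ell_{a-b}^{N/(1-(a-b))}}^{p}\,\lVert u\rVert_{\ell_{b}^{q^{\ast}}}^{p}$ for the cross term, and then shows $\lVert\nabla\eta\rVert_{\ell_{a-b}^{N/(1-(a-b))}}\to 0$; this last step is precisely what forces the logarithmic profile, since for a linear cutoff that norm stays bounded away from zero. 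You instead keep the standard linear cutoff but apply H\"older \emph{on the annulus}, so the first H\"older factor is the $\ell_{b}^{q^{\ast}}$-tail of $u$, which tends to zero, while the second factor times $R^{-p}$ is merely bounded; the dimensional balance $Np/q^{\ast}=N-p+p(a-b)$ is exactly what makes the residual power of $R$ collapse to $0$. Your approach trades a delicate cutoff construction for a sharper use of the hypothesis $u\in\ell_{b}^{q^{\ast}}$, and it makes the role of the critical exponent particularly transparent; the paper's approach isolates a sufficient condition on the cutoff alone that is independent of $u$. Both handle the borderline $a-b=1$ separately (you by direct absorption of $R^{-p}$, the paper via $\lVert\nabla\eta\rVert_{\ell_{1}^{\infty}}\to 0$).
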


\begin{proof}
First $D_{a,0}^{1,p}(\mathbb{Z}^{N})\subseteq D_{a}^{1,p}(\mathbb{Z}^{N})$ follows from the CKN inequality (\ref{eq:CKN inequality}). 
For the other direction, the key is to find suitable cutoff functions
$\eta_{n}(x): C_{c}\left(\mathbb{Z}^{N}\right)\to [0,1]$. For any $u\in D_{a}^{1,p}(\mathbb{Z}^{N})$,
set $u_{n}\coloneqq u\eta_{n}$.

If $0\leq a-b<1$, then by H$\ddot{\text{o}}$lder inequality,
\begin{align}
\lVert u_{n}-u\rVert_{D_{a}^{1,p}(\mathbb{Z}^{N})}^{p} & =\sum\limits _{x\in\mathbb{Z}^{N}}\sum\limits _{y\sim x}\mu_{ap}(x)\lvert\nabla_{xy}(u\eta_{n})-\nabla_{xy}u\rvert^{p}\nonumber \\
 & =\sum\limits _{x\in\mathbb{Z}^{N}}\sum\limits _{y\sim x}\mu_{ap}(x)\lvert\eta_{n}(y)\nabla_{xy}u+u(x)\nabla_{xy}\eta_{n}-\nabla_{xy}u\rvert^{p}\label{eq:k=00003D00003D00003D1-1}\\
 & \lesssim\sum\limits _{x\in\mathbb{Z}^{N}}\mu_{ap}(x)\lvert\nabla u(x)|_{p}^{p}\underset{y\sim x}{\text{max}}|\eta_{n}(y)-1|^{p}+\lVert\nabla\eta_{n}\rVert_{\ell_{a-b}^{\frac{N}{b-a+1}}}^{p}\lVert u\rVert_{\ell_{b}^{q^{\ast}}}^{p}.\nonumber 
\end{align}
If the cutoff functions satisfy 
\begin{equation}
\eta_{n}\to1\text{ pointwise on \ensuremath{\mathbb{Z}^{N}}},\;\lVert\nabla\eta_{n}\rVert_{\ell_{a-b}^{\frac{N}{b-a+1}}}\rightarrow0,\label{eq: cut off fun req 2}
\end{equation}
then the other direction follows from the dominated convergence theorem.

Let $r>10$, and $R\gg r$ be large enough. Define 
\[
\eta(x)\coloneqq1\wedge\frac{\textrm{log\ensuremath{R}}-\text{log\ensuremath{|x|}}}{\textrm{log\ensuremath{R}}-\text{log\ensuremath{r}}}\vee0,
\]
where $|x|$ stands for the Euclidean distance. Then

\begin{align*}
\lVert\nabla\eta\rVert_{\ell_{a-b}^{\frac{N}{b-a+1}}(\mathbb{Z}^{N})}^{\frac{N}{b-a+1}} & =\sum\limits _{r\leq| x|\leq R}\sum\limits _{y\sim x}\mu_{\frac{(a-b)N}{b-a+1}}\lvert\nabla_{xy}\eta\rvert^{\frac{N}{b-a+1}}\\
 & \lesssim\left(\textrm{log\ensuremath{\frac{R}{r}}}\right)^{-\frac{N}{b-a+1}}\sum\limits _{r-1\leq| x|\leq R+1}\sum\limits _{y\sim x}\mu_{\frac{(a-b)N}{b-a+1}}\lvert\textrm{log\ensuremath{| x|}}-\text{log\ensuremath{\ensuremath{| y|}}}\rvert^{\frac{N}{b-a+1}}\\
 & \lesssim\left(\textrm{log\ensuremath{\frac{R}{r}}}\right)^{-\frac{N}{b-a+1}}\sum\limits _{r-1\leq| x|\leq R+1}\mu_{\frac{(a-b)N}{b-a+1}}\lvert x\rvert^{-\frac{N}{b-a+1}},
\end{align*}
where the second inequality follows from the mean value theorem. Next, we estimate the summation on $\mathbb{Z}^{N}$
by the integral on $\mathbb{R}^{\mathit{N}}$ as follows 
\begin{align*}
\sum\limits _{r-1\leq| x|\leq R+1}\mu_{\frac{(a-b)N}{b-a+1}}\lvert x\rvert^{-\frac{N}{b-a+1}} & \lesssim\sum\limits _{r-2\leq| x|\leq R+2}\int\limits _{S_{x}(\frac{1}{2})}| x|^{\frac{(a-b)N}{b-a+1}}\lvert x\rvert^{-\frac{N}{b-a+1}}\text{d\ensuremath{t}}\\
 & \lesssim\sum\limits _{r-2\leq| x|\leq R+2}\int\limits _{S_{x}(\frac{1}{2})}\lvert t\rvert^{-N}\text{d\ensuremath{t}}\\
 & \lesssim\int\limits _{B_{R+3}\setminus B_{r-3}}\lvert t\rvert^{-N}\text{d\ensuremath{t}},
\end{align*}
where $S_{x}(\frac{1}{2})\coloneqq\left\{ t\in\mathbb{R}^{\mathit{N}}:\lvert t_{i}-x_{i}\rvert<\frac{1}{2},1\leq i\leq N\right\} $
is the Euclidean cube, and $B_{r}$ is the Euclidean ball with radius
of $r$ and centered at the origin. Hence, 
\[
\lVert\mu_{a-b}\nabla\eta\rVert_{\ell^{\frac{N}{b-a+1}}}^{\frac{N}{b-a+1}}\lesssim\left(\textrm{log\ensuremath{\frac{R}{r}}}\right)^{-\frac{N}{b-a+1}}\text{log}\ensuremath{\frac{R}{r}}=O\left(\textrm{\ensuremath{\left(\text{log}\ensuremath{\frac{R}{r}}\right)}}^{1-\frac{N}{b-a+1}}\right).
\]
Note that $1-\frac{N}{b-a+1}<0$ since $1-N\leq a-b<1$. Then
for fixed $r$, letting $R\to\infty$, then $\lVert\mu_{a-b}\nabla\eta\rVert_{\ell^{\frac{N}{b-a+1}}}\to0$.
Hence $u\in D_{a,0}^{1,p}(\mathbb{Z}^{N})$ by (\ref{eq:k=00003D00003D00003D1-1}).

If $a-b=1$, we need to prove $\lVert\nabla\eta\rVert_{\ell_{1}^{\infty}}\rightarrow0$
instead of (\ref{eq: cut off fun req 2}). By the same definition
of $\eta$, we have 
\[
\lVert\nabla\eta\rVert_{\ell_{1}^{\infty}}\lesssim\underset{r-1\leq| x|\leq R+1}{\sup}\sum\limits _{y\sim x}\mu_{1}(x)\left(\textrm{log\ensuremath{\frac{R}{r}}}\right)^{-1}\lvert\textrm{log\ensuremath{| x|}}-\text{log\ensuremath{\ensuremath{| y|}}}\rvert=O\left(\textrm{\ensuremath{\left(\text{log}\ensuremath{\frac{R}{r}}\right)}}^{-1}\right),
\]
which goes to zero as $R\to\infty$. Thus we prove the result.
\end{proof}

\section{The existence of minimizers}

Now we are ready to prove Theorem \ref{thm:main1}.
\begin{proof}[Proof of Theorem \ref{thm:main1}]
Note that radial nonincreasing weight functions $\mu_{b}\in\mathcal{S}(\mathbb{Z}^{N})$,
then by the discrete Hardy-Littlewood inequality in Lemma \ref{lem:rearrangement inequalities},
for any non-negative $u\in \ell_{b}^{q}(\mathbb{Z}^{N})$,
\[
\parallel\mu_{b}u\parallel_{\ell^{q}}\leq\parallel\mu_{b}u^{\star}\parallel_{\ell^{q}}.
\]
Hence, letting $\left\{ u_{n}\right\} $ be a minimizing sequence
satisfying (\ref{eq:min seq}), by Lemma \ref{lem:rearrangement inequalities} and Lemma \ref{lem:Polya-Szego_inequality}, we may assume that $u_{n}\in\mathcal{S}(\mathbb{Z}^{N})$.
Since $\left\{ u_{n}\right\} $ is uniformly bounded in $D_{0}^{1,p}(\mathbb{Z}^{N})$
and by the CKN inequality (\ref{eq:CKN inequality}), we have 
\[
\lVert\mu_{b}u_{n}\rVert_{\ell^{q^{\ast}}}\leq C.
\]
By Lemma \ref{lem:compact embedding}, since $q>q^{\ast}$, passing to a subsequence, 
\[
\mu_{b}u_{n}\to\mu_{b}u\quad\textrm{in}\;\ell^{q}
\]
for some $u\in\ell_{b}^{q}(\mathbb{Z}^{N}).$ Then $\lVert u\rVert_{\ell_{b}^{q}}=1$. Hence, $\lVert u\rVert_{D^{1,p}}=S$ and $u\in D_0^{1,p}(\mathbb{Z}^{N})$ by Theorem \ref{thm:the equivalence for Z^N_k=00003D00003D00003D1}. This proves the result.
\end{proof}

\begin{proof}[Proof of Theorem \ref{thm:main2}]
	Letting $\left\{ u_{n}\right\} $ be a minimizing sequence of
	(\ref{eq: inf-2}), by Lemma \ref{lem:rearrangement inequalities} and Lemma \ref{lem:Polya-Szego_inequality}, we may assume that $u_{n}\in\mathcal{S}(\mathbb{Z}^{N})$. Since
	$\parallel u_{n}\parallel_{D^{1,p}(\mathbb{Z}^{N})}^{\theta}\parallel u_{n}\parallel_{\ell^{r}(\mathbb{Z}^{N})}^{1-\theta}$
	is uniformly bounded, then by the CKN inequality (\ref{eq:CKN inequality}),
	we have 
	\[
    \lVert u_{n}\rVert_{\ell^{q^{\ast}}}\leq C.
    \]
	By Lemma \ref{lem:compact embedding}, since $q>q^{\ast}$,  passing to a subsequence, 
	\[
    u_{n}\to u\quad\textrm{in}\;\ell^{q}
     \]
	for some $u \in \ell^{q}(\mathbb{Z}^{N})$ and $\parallel u\parallel_{\ell^{q}}=1$.
	By the diagonal principle, passing to a subsequence we get the pointwise
	convergence
		\[
	u_{n}\to u\quad\textrm{pointwise on \ensuremath{\mathbb{Z}^{N}}}.
	\]
	Then for any $R>1$, on $B_{R}\coloneqq\left\{ x\in\mathbb{Z}^{N}:d(x)<R\right\} $,
	\[
	\parallel\nabla u_{n}\parallel_{\ell^{p}(B_{R})}^{\theta}\parallel u_{n}\parallel_{\ell^{r}(B_{R})}^{1-\theta}\to\parallel\nabla u\parallel_{\ell^{p}(B_{R})}^{\theta}\parallel u\parallel_{\ell^{r}(B_{R})}^{1-\theta},
	\]
	and by 
	\[
	\parallel\nabla u_{n}\parallel_{\ell^{p}(B_{R})}^{\theta}\parallel u_{n}\parallel_{\ell^{r}(B_{R})}^{1-\theta}\leq\varliminf\limits _{n\to\infty}\parallel\nabla u_{n}\parallel_{\ell^{p}(\mathbb{Z}^{N})}^{\theta}\parallel u_{n}\parallel_{\ell^{r}(\mathbb{Z}^{N})}^{1-\theta}=K,
	\]
	we get 
	\[
	\parallel\nabla u\parallel_{\ell^{p}(B_{R})}^{\theta}\parallel u\parallel_{\ell^{r}(B_{R})}^{1-\theta}\leq K.
	\]
	Letting $R\to+\infty$, 
	\begin{equation}
		\parallel\nabla u\parallel_{\ell^{p}(\mathbb{Z}^{N})}^{\theta}\parallel u\parallel_{\ell^{r}(\mathbb{Z}^{N})}^{1-\theta}\leq K.\label{eq: limit bounded}
	\end{equation}
	Then we claim that $u\in D_0^{1,p}(\mathbb{Z}^{N}) \cap \ell ^r(\mathbb{Z}^{N})$. Suppose by contradiction that $\parallel u\parallel_{\ell^{r}(\mathbb{Z}^{N})}=+\infty$,
	then $\parallel\nabla u\parallel_{\ell^{p}(\mathbb{Z}^{N})}=0$
	by (\ref{eq: limit bounded}). This yields $u\equiv \text{const}\neq0$ which
	contradicts to $\parallel u\parallel_{\ell^{q}(\mathbb{Z}^{N})}=1$.
	And if $\parallel\nabla u\parallel_{\ell^{p}(\mathbb{Z}^{N})}=+\infty$,
	then $\parallel u\parallel_{\ell^{r}(\mathbb{Z}^{N})}=0$, which contradicts
	to $\parallel u\parallel_{\ell^{q}(\mathbb{Z}^{N})}=1$. Hence, $u\in D_0^{1,p}(\mathbb{Z}^{N}) \cap \ell ^r(\mathbb{Z}^{N})$ and $u$ is a minimizer.
\end{proof}

By the proofs of Theorem \ref{thm:main1} and Theorem \ref{thm:main2}, we know that the minimizers obtained are Schwarz symmetric. Finally, we prove Corollary \ref{cor:1} and Corollary \ref{cor: GN_EL equation}. 
\begin{proof}[Proof of Corollary \ref{cor:1} and Corollary \ref{cor: GN_EL equation}]
	By Theorem \ref{thm:main1} there exists a Schwarz symmetric
	minimizer $u$ for $S$. It follows from the Lagrange multiplier that $u$ is a non-negative
	solution of equation (\ref{eq:EL equation}). The maximum principle yields that $u$ is
	positive. By the same argument, we can prove Corollary \ref{cor: GN_EL equation}.
\end{proof}

$\mathbf{\boldsymbol{\mathbf{Acknowledgements}\mathbf{}\text{:}}}$
The authors are grateful to Chao Ji for proposing the problem on discrete CKN inequalities, and thank him and Bobo Hua for helpful discussions and guidance. We also thank Hichem Hajaiej for his suggestions in an online discussion. We are grateful to Dong Ye for the  proofreading of the manuscript and many valuable comments and corrections.

\bibliographystyle{plain}
\bibliography{CKN_ref}

\end{document}